\documentclass{icmart}

\usepackage[non-sorted-cites]{amsrefs}
\usepackage{euscript}
\usepackage{amsmath}
\usepackage{amscd}
\usepackage{amssymb}
\usepackage{enumerate}
\usepackage{stmaryrd}
 \usepackage{amsfonts}
\usepackage[all]{xy}
\usepackage{eufrak}
\usepackage{textcomp}
\usepackage{tikz}
\usetikzlibrary{calc,intersections,through,backgrounds}
\newlength{\lw}
\setlength{\lw}{.25pt}

\newcommand{\bC}{{\mathbb C}}

\newcommand{\bR}{{\mathbb R}}
\newcommand{\bZ}{{\mathbb Z}}

\newcommand{\bF}{{\mathbb F}}

\newcommand{\bk}{{\mathbf k}}

\newcommand{\cE}{\mathcal E}
\newcommand{\cF}{\mathcal F}

\newcommand{\cA}{\mathcal A}

\newcommand{\cL}{\mathcal L}

\newcommand{\scrF}{\EuScript F}

\newcommand{\scrM}{\EuScript M}
\newcommand{\scrO}{\EuScript O}
\newcommand{\scrU}{\EuScript U}

\newcommand{\TQ}{T^{*}Q}
\newcommand{\Q}{Q}

\newcommand{\triv}{\tau}

\newcommand{\scrJ}{{\EuScript J}}

\newcommand{\id}{\operatorname{id}}

\newcommand{\Fuk}{\operatorname{Fuk}}
\newcommand{\Coh}{\operatorname{Coh}}

\newcommand{\Hom}{\operatorname{Hom}}

\newcommand{\Strip}{B}

\newcommand{\Pin}{\operatorname{Pin}}

\newcommand{\val}{\operatorname{val}}

\newcommand{\Vect}{\operatorname{Vect}}
\newcommand{\Aff}{\operatorname{Aff}}
\newcommand{\Diff}{\operatorname{Diff}}
\newcommand{\Id}{\operatorname{Id}}
\def\co{\colon\thinspace}

\numberwithin{equation}{section}

\newtheorem{thm}{Theorem}[section]
\newtheorem{cor}[thm]{Corollary}
\newtheorem{lem}[thm]{Lemma}
\newtheorem{prop}[thm]{Proposition}
\newtheorem{defin}[thm]{Definition}
\newtheorem{def-lem}[thm]{Definition-Lemma}
\newtheorem{conj}[thm]{Conjecture}

\theoremstyle{remark}

\newtheorem{rem}[thm]{Remark}

\newcommand{\comment}[1]{}

\contact[abouzaid@math.columbia.edu]{Department of Mathematics, Columbia University, 2990 Broadway
New York, NY 10027, USA}

\title[Family Floer cohomology and mirror symmetry]{Family Floer cohomology and mirror \\symmetry}

\author[M.~Abouzaid]{Mohammed Abouzaid} \date{\today}
\begin{document}

\begin{abstract}
Ideas of Fukaya and Kontsevich-Soibelman suggest that one can use Strominger-Yau-Zaslow's geometric approach to mirror symmetry as a torus duality to construct the mirror of a symplectic manifold equipped with a Lagrangian torus fibration as a moduli space of those simple objects of the Fukaya category which are supported on the fibres. In the absence of singular fibres, the construction of the mirror is explained in this framework, and, given a Lagrangian submanifold, a (twisted) coherent sheaf on the mirror is constructed.
\end{abstract}

\begin{classification}
Primary 53D40; Secondary 14J33.
\end{classification}
\begin{keywords}
Lagrangian Floer cohomology, Homological Mirror symmetry.
\end{keywords}
\maketitle
\section{Introduction}
\label{sec:introduction}

\subsection{Overview}
\label{sec:fluff}

Mirror symmetry is a prediction from string theory identifying invariants associated to the complex geometry of a family of Calabi-Yau manifolds  with  invariants associated to the K\"ahler geometry of a possibly different Calabi-Yau manifold that is called the mirror. In our setting, we take as definition of a Calabi-Yau manifold a complex manifold equipped with a nowhere vanishing holomorphic volume form. The original focus in mathematics was on the dualities of Hodge diamonds which gave a straightforward though non-trivial check, and on the enumerative predictions for the number of curves of a given genus and degree that went beyond the computations which could be performed using rigourous methods. While it is not reasonable to expect the existence of a mirror partner to every Calabi-Yau manifold, there are large classes of examples (e.g. toric complete intersections  \cite{batyrev,givental})  for which various original forms of the conjecture have been verified.

In \cite{Kontsevich-ICM}, Kontsevich introduced a homological version of the conjecture: the invariants to be related would be the derived category of coherent sheaves on the complex side and the Fukaya category of Lagrangian submanifolds on the symplectic side. Strominger, Yau, and Zaslow \cite{SYZ} later introduced a geometric version of the conjecture: mirror pairs should arise as dual torus fibrations over the same base; these are often called SYZ fibrations. The degenerating family can then be understood as arising from rescaling the fibres. 

It is easier to state precise versions of the SYZ conjecture (which still hold in a large class of examples) if one analyses the two sides of mirror symmetry separately, i.e. fixing a K\"ahler form on a Calabi-Yau manifold $X$ whose symplectic topology will be related to the complex geometry of a Calabi-Yau variety $Y$ over the ring of power series $\bC[[T]]$ or the analogous rings appearing naturally in symplectic topology in which real exponents are allowed; weakening one side to a formal family is related to a convergence problem in Floer theory.  

In this context, the starting point of a reformulation of the SYZ conjecture is the existence of a Lagrangian fibration $
\pi \co X \to \Q  
$ with singularities (there is still no good approach to the class of allowable singularities). The space $Y$ should then be constructed from the Fukaya category $\Fuk(X)$ as a moduli space of objects  supported on fibres of this map \cite{fukaya-K3}. While the moduli space of such objects can be described locally over the base as the dual torus fibration, the fact that we consider them as objects of the Fukaya category introduces non-trivial identifications of the local charts given by the sum of a classical term with \emph{instanton corrections} that arise from the moduli space of holomorphic discs bounded by such fibres. These corrections are expected to be expressible in terms of the geometry of the base via \emph{wall-crossing formulae} \cite{KS,gross-siebert}.

In this setting, the homological mirror conjecture asserts the existence of a derived equivalence
$
 D   \Fuk(X) \cong D \Coh(Y),
$
where both categories are linear over the appropriate version of power series rings. Much effort has gone in verifying this conjecture in certain examples \cites{seidel-quartic,sheridan,ASmith}, and extracting some of the classical statements of mirror symmetry from it \cite{KKP,costello}.  

Unfortunately, all the current proofs of mirror symmetry rely on \emph{ad hoc} methods to construct the mirror functor, neglecting the construction of the mirror as a moduli space of objects of the Fukaya category.  To address this issue, Fukaya has introduced family Floer cohomology \cite{fukaya-familyHF,fukaya-K3,Fukaya-cyclic,Fukaya-announce} as a strategy for directly assigning a sheaf on $Y$ to a Lagrangian $L$ in $X$; as noted in \cite{fukaya-K3}, the main difficulty arises from the caustics of $L$, i.e. the singularities of its  projection  to the base. In Section \ref{sec:from-lagr-grad}, we use the invariance properties of Floer cohomology under continuation maps to bypass the difficulties arising from caustics (as in  \cite[Section 6]{fukaya-familyHF}), and prove convergence in the rigid analytic sense.

\vspace{.1in}

\subsection{A twisted example}
\label{sec:example}

Since the problem is sufficiently complicated in the absence of singular fibres,  we shall henceforth only consider symplectic manifolds that admit smooth Lagrangian torus fibrations.  This class includes, for example, a codimension $\frac{n(n-1)}{2}$ subspace of the $n(2n-1)$-dimensional space of linear symplectic structures on $\bR^{2n}/\bZ^{2n}$, corresponding to those structures that can be represented as the quotient of  $\bR^{2n}$ by a lattice which intersects some Lagrangian plane in a rank $n$ subgroup. It also includes the following twisted example due to Thurston \cite{thurston}:

Equip $\bR^{4}$ with coordinates $(x_1, x_2, x_3, x_4)$, and symplectic structure
\begin{equation}
  dx_1 \wedge dx_2 + dx_3 \wedge dx_4.
\end{equation}
This form is invariant under the  transformation
  \begin{equation}
    (x_1, x_2, x_3, x_4) \to (x_1 +1, x_2, x_3, x_4+x_3)
  \end{equation}
as well as under translation by integral vectors of the form $(0,  x_2, x_3, x_4)$. The Thurston manifold is the quotient of $\bR^{4}$ by  the group generated by these transformations. 

Thurston considered the symplectic fibration obtained by forgetting the $(x_3,x_4)$ coordinates, which gives a description of this space as a twisted (flat) torus bundle over the torus.  In joint work with Auroux, Katzarkov, and Orlov \cite{AAKO} we noticed the existence of two (inequivalent) Lagrangian fibrations over this space:

(1) The fibration obtained by forgetting the $x_2$ and $x_4$ coordinates is a principal bundle on which the $(x_2,x_4)$-torus acts.  Since the total space is not trivial, this fibration admits no continuous section, and hence, in particular, no Lagrangian section.

(2) The fibration obtained by forgetting the $x_1$ and $x_4$ coordinates admits a Lagrangian section $(0,x_2,x_3,0)$.

The above two examples show that one can have Lagrangian fibrations with completely different behaviour on the same symplectic manifold. The first fibration is mirror to a gerbe on an abelian variety, while the second is mirror to a Kodaira surface. To see this, write the first fibration as the quotient of
$
  [0,1]^{2} \times \left( \bR/\bZ \right)^{2}  
$
by the equivalence relation
\begin{align}
  (0,x_3, x_2, x_4) & \sim (1,x_3, x_2, x_4 + x_3) \\
(x_1, 0, x_2, x_4) & \sim (x_1, 1, x_2, x_4). 
\end{align}
Note that the gluing maps act trivially on the homology of the fibre, so the action on the space of local systems is trivial. Since the fibres bound no holomorphic disc, the mirror is the moduli space of such local systems; it agrees with the mirror of the symplectic manifold obtained via the trivial identifications. This symplectic manifold is simply the product of two tori of area $2 \pi$, hence the mirror is a product of two (families of) elliptic curves as is well-understood by now \cite{ASmith}.

At this stage, it is clear that additional data are needed to implement mirror symmetry from this point of view.  At the most basic level, the Lagrangian fibres are null-homologous, which implies that Floer cohomology has vanishing Euler characteristic whenever one of the two inputs is a fibre.  Under mirror symmetry the fibres map to skyscraper sheaves of point, and there are therefore many coherent sheaves (e.g. vector bundles of non-vanishing rank) whose mirrors would be expected to have Floer cohomology groups with a fibre whose Euler characteristic does not vanish.

The additional data arise naturally on both sides: by obstruction theory, the failure of this torus fibration to be trivial is detected by a second cohomology group of the base  as one  can easily construct a Lagrangian section in the complement of a point.  This obstruction class is constructed in Section \ref{sec:lagr-torus-fibr} using a \v{C}ech cover, and a simple exponentiation procedure in Section \ref{sec:rigid-analyt-gerb} then yields an $\scrO^{*}$-valued second cohomology class on the mirror space; i.e. a gerbe. The correct statement of mirror symmetry involves sheaves twisted by this gerbe.

For completeness, we elaborate on the description of the mirror of the second fibration: a convenient starting point is the vector bundle $ \left( \bR/\bZ \right)^{2}   \times \bR^{2}$  over the torus in the $(x_2,x_3)$ coordinates whose fibre is the plane spanned by $(x_1,x_4)$. The key observation is that the Thurston manifold is obtained by taking the quotient of the fibre over $(x_2,x_3)$ by the lattice spanned by
$
  (1,x_3)$  and $(0,1)$.
The corresponding family of lattices in $\bR^{2}$ has non-trivial monodromy (given by an elementary transvection) around a loop in the $x_3$ direction. Constructing the mirror (complex) manifold by dualising the fibres, we conclude that the underlying smooth manifold is also a torus fibration over the torus with total Betti number $3$, hence a primary Kodaira surface \cite[p. 787-788]{Kodaira}. With a bit more care, one can avoid appealing to the classification of surfaces and identify the (complex) mirror as the degenerating family of quotients of $\bC^{*} \times \bC^{*}$ by the groups of automorphisms
\begin{align}
  (z_1, z_2) & \mapsto (z_1,  T \cdot z_2) \\
(z_1, z_2) & \mapsto (T \cdot z_1,  z_1 z_2)
\end{align}
parametrised by a variable $T$. To obtain a precise statement of Homological mirror symmetry in this setting, one then interprets the above as giving  a rigid analytic primary Kodaira surface \cite[p. 788]{ueno}.

One interesting outcome of this observation is that the two homological mirror symmetry statements imply the existence of a derived equivalence between twisted coherent sheaves on the mirror abelian variety and coherent sheaves on the mirror primary Kodaira surface. Such a result can be proved independently of mirror symmetry by exhibiting a Fourier-Mukai kernel \cite{AAKO}.

\subsection{Statement of the results}
\label{sec:statement-results}
Let $(X,\omega)$ be a compact symplectic manifold, equipped with a fibration $
\pi \co X \to \Q $
over a smooth manifold $\Q$ with Lagrangian fibres; the triple $(X,\omega,\pi)$ is called a \emph{Lagrangian fibration}; write $F_{q}$ for the fibre at a point $q$, and assume that $\pi_{2}(\Q) = 0.$

\begin{rem}

The vanishing of the second homotopy group of $\Q$ implies that $\pi_{2}(X,F_{q})$ vanishes, and hence that $F_{q}$ bounds no holomorphic disc, which implies that there are no instanton corrections, i.e. that the mirror should be the space of rank-$1$ unitary local systems on the fibres. There seem to be no known examples where this condition fails (though this can be arranged if $X$ is not assumed to be compact, or if singular fibres are allowed).

\end{rem}
The symplectic topology of such fibrations is reviewed in Section \ref{sec:lagr-torus-fibr}, but for now, recall that the tangent space of $\Q$ at a point $q$ is naturally isomorphic to $H^{1}(F_{q}, \bR) \equiv H^{1}(F_{q}; \bZ) \otimes \bR$. Arnol'd's Liouville theorem implies that the corresponding lattice in $T\Q$ arises from an integral affine structure on $\Q$. In particular, $\Q$ can be obtained by gluing polytopes in $\bR^{n}$ by transformations whose differentials lie in $SL(n,\bZ)$.

To this integral affine structure, one associates a rigid analytic space (in the sense of Tate), which we denote $Y$ (this is the same construction used in \cite{KS,FOOO-toric,tu2}): fix a field $\bk$, and consider the Novikov field
\begin{equation}
\Lambda = {\Big{\{}} \sum_{i=1}^{\infty} a_{i} t^{\lambda_i} | a_{i} \in \bk, \, \lambda_i \in \bR , \, \lambda_i \to +\infty{\Big \}}.
\end{equation}
This is a non-archimedian field with valuation $ \val \co \Lambda - \{ 0 \}  \to \bR$ 
\begin{equation}
\sum_{i=1}^{\infty} a_{i} t^{\lambda_i}  \mapsto \min( \lambda_i | a_i \neq 0 ).
\end{equation}
Denote by $U_{\Lambda}$ the units of $\Lambda$, i.e. those elements with $0$-valuation. 

As a set, the space $Y$ is the union $
  \coprod_{q \in \Q} H^{1}(F_{q}; U_{\Lambda})$. This description makes explicit the fact that $Y$ parametrises Lagrangian fibres together with the datum of a \emph{rank-$1$ $\Lambda$-local system} with monodromy in $U_{\Lambda}$. The analytic structure on $Y$ arises from the natural identifications of the first cohomology groups of nearby fibres; the explicit construction appears in Section \ref{sec:constr-mirr-space}.

Given a Lagrangian $L \subset X$, satisfying the technical conditions required to make Floer cohomology well-defined, one obtains Floer cohomology groups
$ HF^{k}( (F_q,b), L; \Lambda)$ for each  $q \in \Q$,  and  $b \in H^{1}(F_{q}; U_{\Lambda})$.

These groups are locally the fibres of coherent sheaves on $Y$, as can be seen by adapting an argument of Fukaya \cite{Fukaya-cyclic} who studied the case of self-Floer cohomology.   In order to encode the full data of the global Floer theory of $X$ on the $Y$-side, an \emph{analytic gerbe} $\alpha_{X}$ on $Y$ is introduced in Section \ref{sec:rigid-analyt-gerb}. 
\begin{thm} \label{thm:main}
There is an $\alpha_X$-twisted coherent sheaf on $Y$ whose dual fibre at the point of $Y$ corresponding to a pair $(q,b)$ is $HF^{k}((F_q,b), L; \Lambda) $.
\end{thm}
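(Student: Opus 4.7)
The plan is to construct the sheaf locally over a Čech cover of $\Q$, using family Floer theory along the fibres, and then to glue the local pieces into a global $\alpha_X$-twisted coherent sheaf; the gerbe $\alpha_X$ will record precisely the failure of the gluing maps to satisfy the strict cocycle condition. Throughout, we exploit that, by the hypothesis $\pi_2(\Q)=0$, the fibres $F_q$ bound no non-constant holomorphic disc, so Lagrangian Floer theory for pairs $(F_q,L)$ is unobstructed and the only role of holomorphic curves is in defining differentials and continuation maps.

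First I would fix a sufficiently fine open cover $\{V_\alpha\}$ of $\Q$ by contractible integral-affine charts, chosen so that over each $V_\alpha$ the dual fibration restricts to an affinoid domain $Y_\alpha \subset Y$ of the form $V_\alpha \times H^{1}(F_{q_\alpha};U_\Lambda)$ after trivialising parallel transport. Over each $V_\alpha$ I would choose a family of Floer data (Hamiltonian perturbations, almost complex structures, basepoints inducing a trivialisation of $H^{1}(F_q;\bZ)$) such that the Floer cochain groups $CF^{*}(F_q,L)$ for $q \in V_\alpha$ are generated by transverse intersection points of $L$ with a perturbed copy of $F_q$. Twisting the counts of holomorphic strips by the monodromy of a local system $b \in H^{1}(F_q;U_\Lambda)$ along their boundary on the fibre produces a differential whose matrix entries are power series in the local $U_\Lambda$-coordinates; the $\pi_2(\Q)=0$ hypothesis ensures these series involve only finitely many homotopy classes of strips in each degree, and the non-archimedean convergence promised in Section~\ref{sec:from-lagr-grad} makes them into elements of the affinoid algebra $\scrO(Y_\alpha)$. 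In this way I obtain, on each chart, a finitely-generated complex of free $\scrO(Y_\alpha)$-modules whose fibrewise cohomology at $(q,b)$ is $HF^{*}((F_q,b),L;\Lambda)$.

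Next I would compare two such local constructions on an overlap $V_\alpha \cap V_\beta$ by a continuation map built from a homotopy of Floer data. The invariance of Floer cohomology under continuation, together with the analytic dependence of continuation-counting on the local-system parameter, gives an isomorphism in the derived category of $\scrO(Y_\alpha \cap Y_\beta)$-modules between the two local complexes. On a triple overlap, composing two such continuation maps and comparing with the third does not, a priori, produce the identity: it produces multiplication by a unit in $\scrO$, coming from a one-parameter family of continuation data together with the ambiguity in the choice of trivialisations of $H^{1}(F_q;\bZ)$ over $V_\alpha$ (this is where the topological obstruction of Section~\ref{sec:lagr-torus-fibr} lives). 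These discrepancies form a Čech $2$-cocycle with values in $\scrO^{*}$, and by construction they represent the class $\alpha_X$ produced in Section~\ref{sec:rigid-analyt-gerb} by exponentiating the torus-fibration obstruction. Thus the local complexes glue to an object of the derived category of $\alpha_X$-twisted $\scrO_Y$-modules, and coherence is automatic because each local complex is finitely generated.

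The hard part will be establishing the analytic (not merely formal) character of both the local differentials and the transition continuation maps: one must show that the Gromov compactness estimates on energy, when combined with the integral-affine structure giving the identification $TQ \cong H^{1}(F_q;\bR)$, force the relevant power series to converge on the asserted affinoid charts rather than merely in $\bk[[T]]$. This is exactly the convergence statement alluded to in Section~\ref{sec:from-lagr-grad}; once it is in hand for both the structure maps of the complex and the continuation maps used as transition functions, the coherent-sheaf structure and the $\alpha_X$-twist are formal consequences, and the identification of the dual fibre with $HF^{k}((F_q,b),L;\Lambda)$ follows from the construction of the fibre as the cohomology of the specialisation of the local complex at the maximal ideal corresponding to $(q,b)$.
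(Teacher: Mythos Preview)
Your overall architecture matches the paper's: local free complexes of $\scrO$-modules over affinoid charts, continuation maps as transition data, and the gerbe $\alpha_X$ as the failure of the cocycle condition on triple overlaps. Two points, however, are genuinely wrong and would derail the argument.

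First, the assertion that $\pi_2(\Q)=0$ ``ensures these series involve only finitely many homotopy classes of strips in each degree'' is false. That hypothesis only rules out disc bubbling on fibres (unobstructedness); it places no bound on the number of homotopy classes of strips between two fixed intersection points, which is typically infinite. The entire substance of the convergence result is that the infinite sum $\sum_{u} T^{\cE(u)} z^{[\partial u]}$ lands in $\scrO_P$, and the paper proves this not by abstract Gromov compactness but by Fukaya's device: a family of diffeomorphisms $\psi_p$ carrying $F_q$ to $F_p$ while preserving $\phi(L)$ and tameness of $J$, so that moduli spaces for all nearby fibres are put in bijection with a single one, and an explicit energy formula (Lemma~\ref{lem:area_argument_Floer_differential}) converts Gromov compactness over each $F_p$ into the required valuation growth. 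Without this, your sketch has no mechanism controlling the $z$-exponents.

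Second, the gerbe does not come from ``ambiguity in the choice of trivialisations of $H^1(F_q;\bZ)$''---that group is canonical---nor from a one-parameter family of continuation data. It comes from the choice of fibrewise symplectic trivialisation $\tau_i \co X_{P_i} \cong T^*P_i / T^*_{\bZ} P_i$, i.e.\ a local Lagrangian section. Two such differ by $df_{ij}$, and $\alpha_X(ijk) = f_{ij}+f_{jk}-f_{ik}$ is an integral affine function. The paper's computation in Section~\ref{sec:homological-patching} shows, after reducing via Lemma~\ref{lem:two_maps_commute} to the case where the Hamiltonian and almost-complex data coincide, that $\cL_{jk}\circ\cL_{ij}$ differs from $\cL_{ik}$ by multiplication by $\exp(\alpha_X(ijk))$ on the nose, as a change-of-trivialisation calculation; the homotopy of continuation data plays no role in producing the twist.
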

\begin{rem}
 One can interpret this theorem as an attempt to make rigourous the strategy introduced in Fukaya's announcement \cite{fukaya-familyHF}, which assigns to a Lagrangian a complex analytic sheaf \emph{assuming convergence}. In the special case of Lagrangian surfaces constructed by Hyperk\"ahler rotation, analytic continuation may provide an alternative approach for bypassing the problem arising from caustics, as noted in \cite{fukaya-K3}. As part of an ongoing project to study mirror symmetry from the point of view of family Floer cohomology \cite{tu,tu2}, Junwu Tu has an independent argument to prove a similar result.
\end{rem}

\subsection{Difficulties lying ahead}
While the above result points in the right direction, it is unfortunately not adequate for serious applications. The last section of this paper outlines the construction of an object $\cL$ in a category of $\alpha_X$-\emph{twisted sheaves of perfect complexes} that is a differential graded enhancement of the derived category of $\alpha_X$-twisted coherent sheaves on $Y$. The twisted sheaf in Theorem \ref{thm:main} is obtained from $\cL$ by passing to cohomology. 

The following conjecture makes clear why $\cL$, rather than its cohomology sheaves, is the right object to study:
\begin{conj}
If $L_1$ and $L_2$ are Lagrangians in $X$ with corresponding twisted sheaves of perfect complexes $\cL_1$ and $\cL_2$, there is an isomorphism
  \begin{equation}
    HF^{*}(L_1,L_2) \cong H^*(\Hom_{*}( \cL_1, \cL_2)) .
  \end{equation}
\end{conj}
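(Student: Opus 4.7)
The plan is to upgrade the assignment $L \mapsto \cL$ underlying Theorem~\ref{thm:main} to a cohomologically full and faithful functor from $\Fuk(X)$ to $\alpha_X$-twisted perfect complexes on $Y$, and to deduce the conjecture as its full faithfulness at the level of cohomology. At the chain level, the required comparison map
\[
  \Phi_{L_1, L_2} \co CF^{*}(L_1, L_2; \Lambda) \longrightarrow \Hom_{*}(\cL_1, \cL_2)
\]
is defined by the $\mu^{2}$-type operation of $\Fuk(X)$ applied to triples $(F_q, L_1, L_2)$: a cocycle $\alpha \in CF^{*}(L_1, L_2)$ is sent to the collection, as $q$ ranges over the \v{C}ech cover $\{U_\gamma\}$ of $\Q$ used in the last section to construct $\cL_i$, of counts of rigid pseudo-holomorphic triangles with boundary on $F_q, L_1, L_2$ and with $\alpha$ inserted at the corner between $L_1$ and $L_2$. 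The same continuation-map homotopies that glue the local models of $\cL_i$ into a twisted perfect complex on $Y$ promote these counts into a morphism of twisted complexes, so that $\Phi_{L_1, L_2}$ lands in the bicomplex computing $\Hom_{*}(\cL_1, \cL_2)$.

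The bulk of the proof is then the verification that $\Phi_{L_1, L_2}$ is a quasi-isomorphism, for which I would use the \v{C}ech filtration. Both sides carry a natural filtration indexed by $\{U_\gamma\}$, respected by $\Phi_{L_1, L_2}$, so by a spectral sequence comparison it suffices to show that on each intersection $U_{\gamma_0 \cdots \gamma_k}$ the induced map computes the Floer cohomology of $L_1$ and $L_2$ restricted to $\pi^{-1}(U_{\gamma_0 \cdots \gamma_k})$. Refining the cover so that each such intersection admits a trivialisation of the affine structure, and using $\pi_{2}(\Q) = 0$ together with the continuation-map bypass of Section~\ref{sec:from-lagr-grad} to control action windows, one would replace $L_1, L_2$ over each patch by Hamiltonian-isotopic models transverse to $\pi$ outside a caustic locus. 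On such local models the triangle counts reduce to the standard pairing between the Koszul-type local resolutions that compute $CF^{*}((F_q, b), L_i; \Lambda)$, and a direct calculation identifies the image with the restricted Floer complex.

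The main obstacle is the caustic analysis. Where $L_i|_{U_\gamma}$ is a section of $\pi$, the claim is essentially forced by the $A_{\infty}$-relations and by the fact that the collection $\{F_q\}_{q \in U_\gamma}$ generates the local Fukaya category, so the identification with $HF^{*}(L_1, L_2)$ is automatic. Near a caustic, however, $\pi|_{L_i}$ has critical points and the local module structure of $\cL_i$ is no longer free, so one must lift the convergence and gluing estimates of Section~\ref{sec:from-lagr-grad} from objects to the parametrised $\mu^{2}$-operations entering $\Phi_{L_1, L_2}$: this requires simultaneous control over triangle moduli for $(F_q, L_1, L_2)$ as $q$ varies and compatibility of the chosen continuation homotopies across pairs of open sets. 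Once these local statements are in place, the descent of $\Phi_{L_1, L_2}$ to a global quasi-isomorphism follows formally from the description of $\Hom_{*}(\cL_1, \cL_2)$ as derived global sections of the local hom sheaf.
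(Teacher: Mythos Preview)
The statement you are addressing is stated in the paper as a \emph{conjecture}, not a theorem: no proof is given, and the surrounding discussion presents it precisely as motivation for constructing $\cL$ at the chain level rather than as an established result. There is therefore no proof in the paper against which to compare your proposal.

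As a strategy, your outline is broadly the natural one---define the comparison map via parametrised triangle counts for $(F_q,L_1,L_2)$ and reduce to a local statement by a \v{C}ech filtration---but several of the steps you pass over are substantial problems rather than routine verifications. The Floer complex $CF^{*}(L_1,L_2)$ carries no evident filtration indexed by the cover of $\Q$, so the spectral-sequence comparison you invoke would first require building an auxiliary model adapted to the fibration. Your appeal to the fibres $\{F_q\}$ generating the local Fukaya category is itself a nontrivial generation statement that would need independent proof. Most seriously, the caustic analysis you correctly flag as the main obstacle is exactly why the paper leaves this as a conjecture: the continuation-map argument of Section~\ref{sec:from-lagr-grad} bypasses caustics for the Floer theory of a \emph{single} $L$ against the fibres, but extending the convergence and gluing estimates uniformly to triangle moduli with two Lagrangian boundary conditions is not addressed in the paper and does not follow from the same mechanism without further work.
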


There are two essential difficulties that arise in applying these techniques to the SYZ fibrations $X \to \Q$ that are expected to exist, for example, on Calabi-Yau hypersurfaces in toric varieties:

(1) In general, some smooth fibres may bound non-constant holomorphic discs. Assuming such fibres are \emph{unobstructed}, which essentially means that the counts of holomorphic discs with boundary on a single fibre algebraically vanish, Tu constructed the candidate (open subset) of the mirror in \cite{tu}.  The basic idea, following Fukaya \cite{fukaya-K3} and Kontsevich and Soibelman \cite{KS}, is that we should obtain the mirror space by gluing affinoids as in the uncorrected case, but the gluing maps take into account moduli spaces of holomorphic discs. One can interpret part of the program of Gross and Siebert \cite{gross-siebert} as providing such a construction for fibrations arising from toric degenerations, though it is not yet known how to prove that their construction agrees with the one intrinsic to symplectic topology. Given the appropriate technical tools (i.e. a theory of virtual fundamental chains on moduli spaces of holomorphic discs, as in \cite{FOOO}), the extension of our results to this setting should be straightforward.

(2) The construction of the mirror space from an SYZ perspective also requires understanding the Floer theory of singular fibres. Whenever the fibre is \emph{immersed}, this Floer theory is well-understood \cite{AJ}. In the simplest situation, such a fibre is an immersed Lagrangian $2$-sphere with a single double point in a $4$-manifold, and the nearby Lagrangian (torus) fibres are obtained by Lagrangian surgery \cite{polterovich}. Using the relation between moduli spaces of holomorphic discs before and after surgery \cite{FOOO}, Fukaya \cite{Fukaya-announce} has announced the construction of the mirror space in this setting. In particular, for a symplectic structure on a  $K3$-surface admitting a Lagrangian torus fibration, Fukaya's method provides a construction of the mirror space using family Floer cohomology. Unfortunately, in higher dimensions, SYZ fibrations are expected to have singular fibres which cannot be described as Lagrangian immersions, putting them beyond the reach of current techniques.

\subsection*{Acknowledgments}
It is my pleasure to thank my collaborators, on past and ongoing projects, for teaching me much of what I know about mirror symmetry and symplectic topology. Comments from Ivan Smith and Nick Sheridan were useful in clarifying the exposition and removing confusing conventions. In addition, I am grateful to Kenji Fukaya for explaining to me Tate's acyclicity theorem and its relevance to mirror symmetry, and to Paul Seidel for having instilled in me the lesson that constructions involving the Fukaya category should first be explained at the cohomological level.

The author was supported by NSF grant DMS-1308179. 
\section{Background}
\label{sec:overview}

\subsection{Lagrangian torus fibrations}
\label{sec:lagr-torus-fibr}

Let $\Q$ be the base of a Lagrangian fibration as in Section \ref{sec:statement-results}. The Arnol'd-Liouville theorem implies that we have canonical identifications
\begin{equation} \label{eq:cotangent_identify}
T_{q} \Q = H^{1}(F_{q}; \bR)  \textrm{ and }  T^{*}_{q} \Q  = H_{1}(F_{q}; \bR) .
\end{equation}
Write $T^{\bZ}_{q} \Q$ for the image of $H^{1}(F_{q}; \bZ) $ in $T_{q} \Q$  under the first isomorphism above, $T^{\bZ} \Q$ for the corresponding rank-$n$ local system on $\Q$, and $T^{*}_{\bZ} \Q$ for the dual. The key property satisfied by this  sublattice of $T^* \Q$ is that its (local) flat sections are closed, hence exact. On a subset $P$ of $\Q$,  a choice of $n$ functions whose differentials span $T^{*}_{\bZ} P$ at every point defines an immersion into $\bR^{n}$ mapping the fibres of $T^{\bZ} P $ to the standard lattice $\bZ^{n}$ in the tangent space of $\bR^{n}$. By choosing $P$ sufficiently small, we obtain a coordinate patch for the \emph{integral affine structure} on $\Q$ induced by $T^{\bZ} \Q $.

The inverse image $X_{P}$ of such a subset under $\pi$ is fibrewise symplectomorphic to $T^{*}P/  T^{*}_{\bZ} P$.  Given sets $P, P' \subset \Q$ which intersect, the restrictions of the two symplectomorphisms  induce a fibrewise symplectomorphism 
\begin{equation}
 T^* \left(P \cap P' \right) / T^{*}_{\bZ} \left(P \cap P' \right) \to  X_{P \cap P'} \to  T^* \left(P \cap P' \right) / T^{*}_{\bZ} \left(P \cap P' \right).
\end{equation}
Such a symplectomorphism can be written as fibrewise addition by a closed $1$-form which is uniquely defined up to an element of $T^{*}_{\bZ} (P \cap P')$.

In order to classify symplectic fibrations which induce a given integral affine structure, choose a finite partially ordered set $\cA$ labelling a simplicial triangulation of $\Q$, i.e. there are vertices labelled by elements of $\cA$, and every cell is the span of a unique ordered subset $I$ of $\cA$.  Assume that this triangulation is sufficiently fine that there is a cover of $\Q$ by codimension $0$ submanifolds with boundary $\{ P_{i} \}_{i \in \cA}$ so that $P_{i}$ contains the open star of the vertex $i$  and all iterated intersections are contractible. Let

\begin{equation}
P_{I} = \bigcap_{i \in I} P_{i}
\end{equation}
and note that $P_{I}$ contains the open star of the cell corresponding to $I$.

Choose a trivialisation $\triv_{i}$ of the inverse image $X_{P_{i}}$, i.e. a fibrewise symplectic identification with $T^{*} P_{i} / T^{*}_{\bZ} P_{i}$.  If $i<j$, the restrictions of $\triv_{i}$ and $\triv_{j}$ to $X_{P_{ij}}$ differ by fibrewise addition of a closed $1$-form; choose a primitive for this $1$-form
\begin{equation}
f_{ij} \co P_{ij} \to \bR.
\end{equation}
If $i < j < k$, the cyclic sum
\begin{equation} \label{eq:representative_alpha}
\alpha_{X} (ijk) = f_{ij} +f_{jk} - f_{ik}
\end{equation}
is function on $P_{ijk}$ whose differential lies in $ T^{*}_{\bZ} P_{ijk}$; i.e. an integral affine function. Such functions define a sheaf on $\Q$ that will be denoted $\Aff$, and $\alpha_{X}$ yields a cocycle in
$ \check{C}^{2}(\Q; \Aff)$.
Write $[\alpha_{X}]$ for the corresponding cohomology class. Here, the \v{C}ech complex associated to a sheaf $\cF$ on $\Q$ is given by
\begin{equation}
\check{C}^{*}_{\cA}(\Q; \cF) = \bigoplus_{\substack{I = (i_0, \ldots, i_{r}) \subset \cA \\ i_0 < \ldots < i_{r}}} \cF(P_{I})[-r]
\end{equation}
with differential given by restriction.

The importance of the sheaf $\Aff$ was noted by Gross and Siebert \cite{gross-siebert-I}, who related it to classical invariants of affine structures. In that spirit, the following is a reformulation of a  result  of Duistermaat \cite[Equation (2.6)]{duistermaat}:
\begin{prop}[c.f. p. 476 of \cite{Clay-book}]
$X$ is determined up to fibrewise symplectomorphism by the triple $(Q, T^{*}_{\bZ} \Q , [\alpha_{X}])$.
\end{prop}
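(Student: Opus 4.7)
The plan is to establish two assertions: (a) the class $[\alpha_X] \in \check{H}^2(\Q; \Aff)$ is independent of the auxiliary choices made in its construction (triangulation, trivialisations $\triv_i$, primitives $f_{ij}$), hence is a genuine invariant of $X$; and (b) if $X, X'$ are two Lagrangian fibrations over the same integral affine base $(\Q, T^{*}_{\bZ}\Q)$ with $[\alpha_X] = [\alpha_{X'}]$, then there is a fibrewise symplectomorphism $X \to X'$. The proof is pure \v{C}ech bookkeeping.

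\textbf{Step 1: Well-definedness.} Since $P_{ij}$ is contractible, the primitive $f_{ij}$ is determined only up to an integral affine function: the closed $1$-form encoding $\triv_j \circ \triv_i^{-1}$ is defined modulo $T^{*}_{\bZ}P_{ij}$, and a primitive on a contractible set is determined up to a constant; together these ambiguities form $\Aff(P_{ij})$. Shifting each $f_{ij}$ by an element of $\Aff(P_{ij})$ changes $\alpha_X$ by a \v{C}ech coboundary. Replacing $\triv_i$ by another symplectic trivialisation alters it by fibrewise translation by an exact $1$-form $dg_i$ on the contractible $P_i$, which shifts $f_{ij}$ by $g_j - g_i$ modulo $\Aff$, again a coboundary. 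Independence of the triangulation follows by passing to a common refinement.

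\textbf{Step 2: Trivialising the difference of cocycles.} After passing to a common refinement I may use the same cover $\{P_i\}$ for both fibrations, with data $(\triv_i, f_{ij})$ and $(\triv'_i, f'_{ij})$. The hypothesis $[\alpha_X] = [\alpha_{X'}]$ furnishes integral affine functions $h_{ij} \in \Aff(P_{ij})$ with $\alpha_X(ijk) - \alpha_{X'}(ijk) = h_{jk} - h_{ik} + h_{ij}$. Then $\{f_{ij} - f'_{ij} - h_{ij}\}$ is a \v{C}ech $1$-cocycle with values in the (fine, hence acyclic on the contractible $P_I$) sheaf of smooth functions on $\Q$, so it is a coboundary: there are smooth $g_i$ on $P_i$ with
\begin{equation}
f_{ij} - f'_{ij} - h_{ij} = g_j - g_i.
\end{equation}

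\textbf{Step 3: Assembling the symplectomorphism.} Define $\phi_i \co X_{P_i} \to X'_{P_i}$ by
\begin{equation}
\phi_i = (\triv'_i)^{-1} \circ \sigma_{-dg_i} \circ \triv_i,
\end{equation}
where $\sigma_\eta$ is fibrewise translation by the closed $1$-form $\eta$ on $T^{*}P_i/T^{*}_{\bZ}P_i$. Each $\phi_i$ is a fibrewise symplectomorphism. A direct computation, reading $\triv_j = \triv_i + df_{ij}$ and $\triv'_j = \triv'_i + df'_{ij}$ in the respective models, shows that the self-map $\phi_j \circ \phi_i^{-1}$ of $X'_{P_{ij}}$ corresponds under $\triv'_j$ to fibrewise translation by
\begin{equation}
df_{ij} - df'_{ij} + dg_i - dg_j = dh_{ij},
\end{equation}
which is a section of $T^{*}_{\bZ}P_{ij}$ and therefore acts as the identity on $T^{*}P_{ij}/T^{*}_{\bZ}P_{ij}$. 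Hence $\phi_i = \phi_j$ on overlaps, and the $\phi_i$ patch to a global fibrewise symplectomorphism $X \to X'$.

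\textbf{The main obstacle} lies in the accounting of Step 3 and the correct identification of the coefficient sheaf. The integer-affine correction $h_{ij}$ is essential in Step 2 to promote the cohomological equality to a cochain identity, yet it must be simultaneously \emph{invisible} to the gluing in Step 3; this works precisely because $T^{*}_{\bZ}$ acts trivially on the torus fibres. This is why $\Aff$, rather than the larger sheaf of all smooth functions (which would retain no information) or the smaller locally constant sheaf $\underline{\bR}$ (which would overcount), is the right home for the invariant.
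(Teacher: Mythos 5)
Your proof is correct, but it follows a genuinely different route from the paper's. The paper does not argue at the \v{C}ech level at all: it quotes Duistermaat's theorem that fibrations inducing the fixed integral affine structure are classified by $H^1(\Q, \Omega^{1}_{c}/T^{*}_{\bZ}\Q)$, and then identifies this group with $H^{2}(\Q,\Aff)$ via the isomorphism of sheaves $C^{\infty}/\Aff \cong \Omega^{1}_{c}/T^{*}_{\bZ}\Q$ together with softness of $C^{\infty}$. You instead bypass Duistermaat entirely and in effect reprove the relevant direction of his result by hand: well-definedness of $[\alpha_X]$, trivialisation of the difference cocycle $f_{ij}-f'_{ij}-h_{ij}$ using fineness of $C^{\infty}$ on a good cover, and explicit gluing of the local translations $(\triv'_i)^{-1}\circ\sigma_{-dg_i}\circ\triv_i$, where the key cancellation $df_{ij}-df'_{ij}+dg_i-dg_j=dh_{ij}\in T^{*}_{\bZ}P_{ij}$ makes the overlaps match because lattice translations act trivially on the torus fibres. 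What your version buys is self-containedness and an explicit construction of the fibrewise symplectomorphism, which amounts to unwinding the connecting isomorphism the paper invokes abstractly; what the paper's version buys is brevity, the stronger classification statement (a bijection between fibrations and $H^{2}(\Q,\Aff)$, not merely injectivity --- though injectivity is all the proposition asserts), and a direct link between $[\alpha_X]$ and Duistermaat's classical invariant, which is then exploited in the subsequent remark via the exact sequence $\bR \to \Aff \to T^{*}_{\bZ}\Q$. Two minor points to tighten: your Step 1 should note that a section of $T^{*}_{\bZ}P_{ij}$ integrates to an integral affine function because $P_{ij}$ is contractible (this is exactly why the ambiguity in $f_{ij}$ is $\Aff(P_{ij})$ and not something larger), and in Step 2 the vanishing of $\check{H}^1$ with $C^{\infty}$ coefficients is a partition-of-unity argument for the (closed, locally finite) cover $\{P_i\}$ rather than a statement about each contractible $P_I$ separately.
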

\begin{proof}
The fibrewise symplectic automorphisms of $X_P$ are given by $\Omega^{1}_{c}(P)/T_{\bZ}^{*} P$, where $\Omega^{1}_{c}$ is the sheaf of closed $1$-forms. As noted by Duistermaat, this implies that Lagrangian fibrations which induce the integral affine structure $T_{\bZ}^{*} \Q $ are classified up to fibrewise symplectomorphism by
$
H^1(\Q, \Omega^{1}_{c}/T_{\bZ}^{*} \Q  ).
$

To obtain the desired result, note that the identification of $ \Omega^{1}_{c} $ with $C^{\infty}/\bR$ induces an isomorphism of sheaves 
\begin{equation}
C^{\infty} / \Aff  \cong \Omega^{1}_{c} /T_{\bZ}^{*} \Q.
\end{equation}
Since $C^{\infty}$ is a soft sheaf, this implies the existence of a canonical isomorphism
\begin{equation}
  H^1(\Q, \Omega^{1}_{c}/T_{\bZ}^{*} \Q) \cong H^2(\Q, \Aff).
\end{equation}
\end{proof}

\begin{rem}
The differentiable type of $X$ is determined by a Chern class with values in
$
H^{2}(\Q, T_{\bZ}^{*} \Q)$.
The short exact sequence $
\bR \to \Aff \to T_{\bZ}^{*} \Q $
induces a long exact sequence
\begin{equation}
 \cdots \to H^{2}(\Q, \bR)  \to H^{2}(\Q,\Aff) \to H^{2}(\Q,T_{\bZ}^{*} \Q) \to \cdots
\end{equation}
which has the following interpretation: once a smooth torus fibration over $\Q$ which is compatible with its affine structure is fixed, the set of symplectic structures on the total space for which this fibration is Lagrangian is either empty or an affine space over $ H^{2}(\Q, \bR) $, where the action is given by adding the pullback of a $2$-form on $\Q$.
\end{rem}

\subsection{Construction of the mirror space}
\label{sec:constr-mirr-space}

The next step is to associate a rigid analytic space $Y$  to the integral affine structure $ T^{\bZ} \Q$ on $\Q$. As a set, $Y$ is simply the flat bundle over $\Q$ 
\begin{equation}
Y =  T^{\bZ} \Q \otimes_{\bZ} U_{\Lambda}
\end{equation}
where $ U_{\Lambda}  $ is the multiplicative subgroup of the units in the Novikov ring as in Section \ref{sec:statement-results}. Write
\begin{equation} 
\val \co Y \to \Q
\end{equation}
for the projection. The fibre of $\val$ at a point $q \in \Q$ is
$
H^1(F_{q}; U_{\Lambda})$.

To construct an analytic structure on $Y$, start by considering the valuation
\begin{equation}
H^1(F_{q}; \Lambda^{*}) \to H^1(F_{q}; \bR),
\end{equation}
whose fibre is $ H^1(F_{q}; U_{\Lambda}) $. Splitting the above map by taking a real number $\lambda$ to $T^{\lambda}$,  yields an isomorphism
\begin{equation} \label{eq:split_cohomology_Lambda}
 H^1(F_{q}; U_{\Lambda}) \times H^1(F_{q}; \bR)  \cong H^1(F_{q}; \Lambda^{*}) .
\end{equation}

If $P$ is a sufficiently small neighbourhood of $q \in \Q$, it can be identified using parallel transport with respect to $T^{\bZ} \Q$ with a neighbourhood of the origin in $ T_{q} \Q$. This gives rise to a natural embedding
\begin{equation} \label{eq:decomposition_fibre_over_P}
Y_{P} \equiv \val^{-1}(P)  = \coprod_{p \in P}   H^1(F_{p};  U_{\Lambda}) \subset H^1(F_{q}; \Lambda^{*}).
\end{equation}

 Assume now that $P \subset H^1(F_{q}; \bR)$ is a polytope defined by integral affine equations, i.e. that there exist integral homology classes $\{ \alpha_{i} \} _{i=1}^{d}$, and real numbers $\{ \lambda_i \}_{i=1}^{d}$ such that
 \begin{equation} \label{eq:integral_affine_polygon}
   P = \{ v \in H^1(F_{q}; \bR) |  \langle v , \alpha_{i} \rangle \leq \lambda_i \textrm{ for } 1 \leq i \leq d\}.   
 \end{equation}
 If $P$ is such a polytope, $Y_{P}$ is a \emph{special affine subset} in the sense of Tate \cite[Definiton 7.1]{tate}; these are now usually studied as examples of the more general class of \emph{affinoid domains} \cite{BGR}. The affinoid ring $ \scrO_{P} $ corresponding to $Y_{P}$ in this case consists of formal series
\begin{equation} \label{eq:element_of_reg_function_p}
  \sum_{A \in H_{1}( F_{q}, \bZ)}  f_{A} z^{A}_{q}, \quad f_{A} \in \Lambda
\end{equation}
which $T$-adically converge at every point of $Y_{P}$, i.e. so that
\begin{equation}
 \lim_{|A| \to +\infty} \val(f_{A}) + \langle v,A \rangle= +\infty 
\end{equation} 
whenever $v$ lies in $P$.

\begin{rem}  \label{rem:change_coordinates}
Despite the fact that Equation \eqref{eq:element_of_reg_function_p} refers to the basepoint $q$, the ring $\scrO_{P} $ does not depend on it. One way to see this is to construct a natural isomorphism of the rings associated to different choices of basepoints.  Say that $p \in \Q$ is obtained by exponentiating $v' \in  H^1(F_{q}; \bR) = T_{q}\Q$. Using  parallel transport to identify the tangent space of $q$ with that of $p$, associate to $P$ the polytope
\begin{equation}
P - v \subset   H^1(F_{p}; \bR)  = H^1(F_{q}; \bR).
\end{equation}
Note that the transformation
\begin{equation} \label{eq:map_change_basepoint}
z_{q}^{A} \mapsto t^{\langle v', A \rangle} z_{p}^{A}
\end{equation}
maps series in $z_{q}$ coordinates which are convergent in $P$ bijectively to series in $z_{p}$ coordinates which are convergent in $P - v'$. 
\end{rem}

We now assume that the cover of $\Q$ chosen in Section \ref{sec:lagr-torus-fibr} has the property that 
\begin{equation}
  \label{eq:P-is-polygon}
 \parbox{31em}{for each ordered subset $I \subset \cA$, $P_{I}$ is an integral affine polytope.}
\end{equation}
Covers satisfying this property exist for the following reason: every point in $\Q$ has a neighbourhood which is an integral affine polytope, and two such polytopes intersect along an integral affine polytope whenever they are sufficiently small. Using such a cover, we see that $Y$ is obtained by gluing affinoid sets; it is therefore an affinoid variety.

\subsection{Sheaves as functors}
\label{sec:an-altern-descr}

Consider the category $\scrO_{\cA}$ whose objects are the ordered subsets of $\cA$.  The morphisms in $ \scrO_{\cA} $ are given by
\begin{equation}
  \scrO(I,J) = \begin{cases} \scrO_{J} & \textrm{ if } I \subset  J \\
0 & \textrm{otherwise.}
\end{cases}
\end{equation}
where $\scrO_{I}$ is the ring of functions on $Y_{I}$ (and $Y_{I}$ denotes $Y_{P_{I}}$).  Composition is defined as
\begin{equation}
\scrO(J,K)  \otimes \scrO(I,J) \cong \scrO_{K} \otimes \scrO_{J} \to \scrO_{K} \otimes \scrO_{K} \to \scrO_{K} \cong \scrO(I,K),
\end{equation}
where the middle two arrows are respectively given by restriction (from $Y_{J}$ to $Y_{K}$), and by multiplication (in $\scrO_{K}$).

\begin{defin}
A pre-sheaf of $\scrO$-modules on $Y$ is a functor from $\scrO_{\cA}$ to the category of $\Lambda$-vector spaces.
\end{defin}

To see that this definition is reasonable, recall that a functor $\cF$ assigns to each set $I$ a $\Lambda$-vector space we denote $\scrF(I)$. Since the endomorphisms of the object $I$ in $\scrO_{\cA}$ is the ring of functions on $Y_{I}$, $\scrF(I)$ is equipped with an $\scrO_{I}$ module structure.  Since $\scrO(I,J) = \scrO_{J}$ whenever $I \subset J$, we obtain a map
\begin{equation}
 \scrO_{J} \otimes_{\Lambda} \scrF(I) \to \scrF(J).
\end{equation}
The associativity equation implies that this is a map of $\scrO_{J}$ modules, and that it descends to a map
\begin{equation} \label{eq:restrict_i_j_sheaf}
 \scrO_{J} \otimes_{\scrO_{I}} \scrF(I) \to \scrF(J) ,
\end{equation}
which exactly implies that we have pre-sheaf of $\scrO$-modules  in the usual sense.

\begin{defin}
  A \emph{sheaf of $\scrO$-modules} on $Y$ is a presheaf such that the structure maps in Equation  \eqref{eq:restrict_i_j_sheaf}   are isomorphisms.
\end{defin}
The key point here is that the category of modules over the ring $\scrO_{I}$ is equivalent to the category of sheaves of $\scrO$-modules on the affinoid space $Y_{I}$ (see, e.g. \cite[Section 9.4.3]{BGR}). The datum of a sheaf on $Y_{I}$  can therefore be replaced by that of a single module $\scrF(I)$.  As in the usual description, a sheaf is therefore a presheaf satisfying an additional property.
\begin{rem}
  Since the ring of regular functions on an affinoid domain is Noetherian \cite[p. 222]{BGR}, the notions of coherence and finite generation agree. So we may define a sheaf of coherent modules on $Y$ to be a sheaf of $\scrO$-modules such that each module $\scrF(I)$ is finitely generated; i.e. admits a surjection from a finite rank free module. A standard argument implies that the cohomology modules of finite rank free cochain complexes over $\scrO_{I}$ are coherent modules; it is in this way that coherent sheaves on $Y$ will arise from the mirror.
\end{rem}

\subsection{Rigid analytic gerbes and twisted sheaves}
\label{sec:rigid-analyt-gerb}

There is a natural map
\begin{align}
\exp \co \Aff(P)  & \to \scrO^{*}(Y_P) \\
F & \mapsto t^{F(q)} z_{q}^{dF},
\end{align}
which induces a map
\begin{equation}
H^{2}(\Q, \Aff) \to H^{2}(Y, \scrO^*).
\end{equation}
This map assigns to each Lagrangian fibration over $\Q$ an (analytic) gerbe on $Y$. 

\begin{rem}
The above map is not surjective, but it is reasonably to expect surjectivity by considering deformations of Floer theory in $X$ by the pullback of classes in $H^{2}(\Q, U_{\Lambda})$. For the subgroups $H^{2}(\Q, \bZ_{2})$ and $H^{2}(\Q, 1 + \Lambda_{+})$,  such deformations were considered separately in \cite{FOOO} as \emph{background class} and \emph{bulk deformation}.

\end{rem}

To define a twisted module over this gerbe, one needs a model for sheaf cohomology on $Y$: choose a cocycle $\alpha_{X}$ in $\check{C}^{2}_{\cA}(\Q, \Aff)$ as in Equation \eqref{eq:representative_alpha}; this consists of an assignment $
  \alpha_{X} (ijk) \in \Aff(P_{ijk}) $ for every triple $i < j < k$, satisfying the cocycle condition.  Given a triple $I \subset J \subset K$  of ordered subsets of $\cA$ with final elements $ (i,j,k)$, define
\begin{equation}
 \Aff(P_{K})  \ni \alpha_{X} (IJK) = \begin{cases}
\alpha_{X}(ijk) |P_{K} & \textrm{ if } i < j < k \\
1  & \textrm{otherwise.}
\end{cases}
\end{equation}

Associated to this cocycle, we define a new category $\scrO^{\alpha_{X}}_{\cA}$ with the same objects and morphisms as $\scrO_{\cA}$. The composition is given by
\begin{align}
 \scrO(J,K) \otimes  \scrO(I,J)  & \to \scrO(I,K) \\
f_{K} \otimes f_{J} & \mapsto   \exp(\alpha_{X}(i j k)) \cdot f_{J}|_{Y_{K}} \cdot f_{K}.
\end{align}
The cocycle  property of $\alpha_{X}$  implies that composition is associative. As in the untwisted case, a functor $\scrF$ from $ \scrO^{\alpha_{X}}_{\cA} $ to $\Vect_{\Lambda}$ induces a map of $\scrO_{J}$ modules
\begin{equation} \label{eq:restrict_i_j_twisted_sheaf}
   \scrO_{J} \otimes_{\scrO_{I}} \scrF(I) \to \scrF(J).
\end{equation}

\begin{defin}
  An $\alpha_{X}$-twisted $\scrO$-module is a functor from $\scrO^{\alpha_{X}}_{\cA}$ to $\Lambda$-vector spaces such that the map in Equation \eqref{eq:restrict_i_j_twisted_sheaf} is an isomorphism for every pair $I \subset J$. \qed
\end{defin}

The above definition unwinds into something more familiar: an $\alpha_{X}$-twisted $\scrO$-module over $Y$ is a collection $ \cF(I)$ of $\scrO_{I}$ modules, together with isomorphisms of $\scrO_{J}$ modules
\begin{equation} \label{eq:restriction_module_naive}
\cF_{IJ} \co  \scrO_{J} \otimes_{\scrO_I}   \cF(I)\to \cF(J),
\end{equation}
defined whenever $I \subset  J$, such that
\begin{equation} 
\cF_{JK} \circ    \cF_{IJ}|_{Y_K} =  \exp(\alpha_{X}(IJK))\cdot \cF_{IK}
\end{equation}
for an ordered triple $I \subset J \subset K$. Here, $\cF_{IJ}|_{Y_K}$ denotes the map induced by $\cF_{IJ}$:
\begin{equation} 
\xymatrix{
\scrO_{K} \otimes_{\scrO_I}  \cF(I)  \ar[r]^-{=} &    \scrO_{K} \otimes_{\scrO_{J}} \scrO_{J}  \otimes_{\scrO_I}\cF(I)   \ar[r]^-{\cF_{IJ} } & \scrO_{K} \otimes_{\scrO_I}  \cF(J).}
\end{equation}

\section{Local constructions}
\label{sec:local-constructions}

\subsection{Basics of Floer theory}
\label{sec:basics-floer-theory}
Assume we are given a Lagrangian $L$ so that
\begin{equation} \label{eq:everything_is_good_with_curves}
\parbox{31em}{$L$ is \emph{tautologically unobstructed,} i.e. there exists a tame almost complex structure $J_{L}$ on $X$ so that $L$ bounds no $J_{L}$-holomorphic disc. }
\end{equation}
This is a technical condition, which will allow us to avoid discussing virtual fundamental chains, and should be replaced by the condition that $L$ is unobstructed in the sense of \cite{FOOO}. 

Given a Hamiltonian diffeomorphism $\phi$ mapping $L$ to a Lagrangian transverse to $F_{q}$, there is an ungraded Floer complex 
\begin{equation} 
  CF^*(F_{q},  \phi(L)),
\end{equation}
generated over $\bF_{2}$ by the intersection points of $\phi(L)$ and $F_{q}$. To define the differential,  choose a generic family of almost complex structures $\{ J_ {t} \in \scrJ \}_{t \in [0,1]}$ so that $J_{0} = \phi^{*}(J_{L})$. For each pair $(x,y)$ of intersection points between $\phi(L)$ and $F_{q}$, the space of $J_{t}$-holomorphic maps from the strip $\Strip = \bR \times [0,1]$ to $X$  satisfying the following boundary and asymptotic conditions
\begin{align}
 u(s,1) \in \phi(L)   & \qquad u(s,0) \in F_{q}  \\ 
\lim_{s \to -\infty} u(s,t) = x  & \qquad \lim_{s \to +\infty} u(s,t) = y 
  \end{align}
admits a natural $\bR$-action by translation in the $s$-coordinate. The quotient by this action of the space of such maps is the moduli space of strips $\scrM^{q}(x,y)$,
and the matrix coefficient of $x$ in $d y$ is the count of rigid elements of this moduli space.  The key point here is that this moduli space is regular for generic almost complex structures, so the count of such isolated elements gives a differential by standard methods.

In order for the mirror of $L$ to be an object of the bounded derived category and to be defined away from fields of characteristic $2$, this construction must be refined to a chain complex of free abelian groups which is $\bZ$ graded. Combining the discussions of orientations in \cite{FOOO} and \cite{seidel-Book}, assume that 
\begin{equation}
  w_{2}(L) = \pi^{*}(w_2(\Q)) .
\end{equation}


Under this assumption, one could make an arbitrary choice of $\Pin^{+}$ structure on the bundles 
 $TF_{q} \oplus \pi^{*}(\TQ \otimes |Q|^{\oplus 3})$  and  $TL \oplus \pi^{*}(\TQ \otimes |Q|^{\oplus 3})$ to define the Floer cohomology of $L$ and $F_{q}$.  It will be important to make a \emph{global} choice, i.e. one obtained by restriction from $X$. To this end, identify the restriction of $\pi^{*}(T^*Q)$ to $F_{q}$ with its tangent space via the Arnol'd-Liouville theorem. In particular, a $\Pin^{+}$ structure on
  \begin{equation} \label{eq:Pin_structure_base}
  T^* \Q  \oplus \TQ \oplus |Q|^{\oplus 3} 
  \end{equation}
will induce one by the pullback to all fibres. The above bundle has vanishing second Stiefel-Whitney class, which is the obstruction to such a structure.

Upon fixing $\Pin^{+}$ structures on $TL \oplus \pi^{*}(\TQ \otimes |Q|^{\oplus 3})$  and in Equation \eqref{eq:Pin_structure_base}, index theory assigns a rank $1$ free abelian group $\delta_{x}$ to each intersection point $x \in \phi(L) \cap F_{q}$, with the property that every rigid element of $ \scrM^{q}(x,y) $ induces a canonical map 
\begin{equation} \label{eq:differential_sign}
   d_{u} \co \delta_{y} \to \delta_{x},
\end{equation}
which should be thought of as the signed contribution of $u$ to the differential.

It remains to lift the grading of the Floer complex to a $\bZ$-grading. Equipped with any  almost complex structure for which the fibres are totally real, there is a  natural isomorphism of vector bundles $
  T X \cong \pi^{*}(T \Q) \otimes_{\bR} \bC$. This implies that a density on $\Q$ induces an almost complex quadratic volume form on $X$.  Evaluating such a form on a basis for the tangent space of a Lagrangian defines the phase function
\begin{equation}
\eta_{\Omega} \co  L \to S^{1}.
\end{equation}

By assuming that the phase function on $L$ is null homotopic and fixing a lift to $\bR$, index theory assigns a degree $
\deg(x) \in \bZ $ to every intersection point $x \in \phi(L) \pitchfork  F_{q}$ with the property that the moduli space $\scrM^{q}(x,y) $ has pure dimension
\begin{equation}
  \dim( \scrM^{q}(x,y) ) =  \deg(x) - \deg(y) -1.
\end{equation}
The differential defined in Equation \eqref{eq:differential_sign} then raises degree by $1$ on the Floer complex
\begin{equation} \label{eq:floer_complex_basepoint}
 CF^d( F_{q}, \phi(L)) = \bigoplus_{\deg(x) = d} \delta_{x}.
\end{equation}

\subsection{Convergence of the Floer differential and restriction}
\label{sec:conv-floer-diff}

Let $P$ be a polytope in $\Q$ containing $q$ and $\scrO_{P}$ denote the affinoid ring of $Y_P$. This section adapts an argument of Fukaya showing that, whenever $P$ is sufficiently small, the Floer complex in Equation \eqref{eq:floer_complex_basepoint} is the fibre of a complex of vector bundles on $Y_{P}$
\begin{equation}  \label{eq:free_module_local}
\bigoplus_{x \in F_{q} \cap L} \scrO_{P} \otimes_{\bZ}  \delta_{x} .
\end{equation}
More precisely, choosing $P$ small enough, there is a differential on Equation \eqref{eq:free_module_local} which specialises to the Floer complex
\begin{equation}
CF^*( (F_{p}, b), \phi(L))
\end{equation}
for every point $(p,b) \in Y_{P}$.

In order to define the differential using this moduli space, it is useful to think of the intersection point $ x \in F_{q} \cap \phi(L) $ as a sheet of $\phi(L)$ over $P$. Fixing a trivialisation
\begin{equation}
\triv_{P} \co X_{P} \cong T^{*}P/ T^{*}_{\bZ} P,
\end{equation}
this can be written as the differential of a function $
g_{x} \co P \to \bR $ which is well-defined up to an integral affine function.

\begin{defin} \label{def:conv-floer-diff}
A collection of \emph{Floer data} $D_{P}$ consists of the choices $(\triv_{P}, \phi, J, \{g_{x} \})$. They \emph{are tame in $P$} if there is a (smooth) map $\psi \co P \to \Diff(X)$ which maps $q$ to the identity, such that $\psi_{p}$ maps $F_{q}$ to $F_{p}$  and preserves $\phi(L)$ and the tameness of the almost complex structures $\{ J_{t} \}_{t \in [0,1]}$.
\end{defin}

 Choosing the functions $\{ g_{x} \}$ yields for each strip $u$ with sides mapping to $L$ and $F_{q}$ a class
\begin{equation} \label{eq:homology_class_strip}
[ \partial u] \in H_{1}(F_{q}, \bZ).
\end{equation}
In order to define this class explicitly, note that the choice of trivialisation of $X_{P}$ determines a $0$-section of $X_{P}$, and hence a basepoint on $F_{q}$. The linear path $t dg_{x}$ has endpoints the basepoint at $t=0$ and $x$ at $t=1$. Define $[ \partial u]  $ to be the homology class of the loop obtained by concatenating the paths associated to $dg_{x} $, $dg_{y} $, and the restriction of $u$ to the boundary.

Letting $z^{[\partial u]}$ be the exponential of the unique linear function on $\Q$ which vanishes at $q$ and whose  differential is given by $[ \partial u] $ under the identification of Equation \eqref{eq:cotangent_identify},  define
\begin{equation} \label{eq:differential_local}
d|\delta_{y}= \bigoplus_{x}  \sum_{u \in \scrM^{q}(x,y)} T^{\cE(u)} z^{[\partial u]} \otimes d_{u},
\end{equation}
where $\scrM^{q}(x,y)$ is the moduli space of strips defining the Floer differential, $d_{u}$ is the map induced on determinant lines by $u$, and $ \cE(u)$ is the energy of $u$:
\begin{equation}
  \cE(u) = \int u^{*} \omega.
\end{equation}
We shall presently use an idea of Fukaya to show that the infinite sum in the expression of the differential lies in $\scrO_{P}$.

Recall from Equation \eqref{eq:decomposition_fibre_over_P}  that every element $z' \in Y_{P}$ can be written as a pair $(p,b')$, with $p \in P$ and $b'\in H^1(F_{p}; U_{\Lambda})$.  There is a natural isomorphism of $\Lambda$-modules
\begin{equation} \label{eq:iso_modules}
\bigoplus_{x} \scrO_{P} \underset{z=z'}{\otimes}   \Lambda \otimes_{\bZ}   \delta_{x}  = \bigoplus_{x} \Lambda \otimes_{\bZ}  \delta_{x}  = CF^*((F_{p}, b'), \phi(L)).
\end{equation}

If $q = p$, this map commutes with the differential defined using $J$, which in particular proves that the series in Equation \eqref{eq:differential_local} are convergent at such points. By defining the right hand side of Equation  \eqref{eq:iso_modules} using the almost complex structure $(\psi_{p}^{-1})^{*} J$ (c.f. \cite{Fukaya-cyclic}) so that composition with $\psi_{p}$ gives a bijection between holomorphic strips with boundary on $F_{q}$ and $\phi(L)$, and those with boundary on $F_{p}$ and $\phi(L)$,   convergence is achieved when $q \neq p$. This is where Definition \ref{def:conv-floer-diff} is used. To state the result, define $p-q$ as the point in $T_{q}\Q$ which exponentiates to $p$.

\begin{lem} \label{lem:area_argument_Floer_differential}
If $u$ lies in $ \scrM^{q}(x,y)$, the energy of $ \psi_{p} \circ u$ is
\begin{equation} \label{eq:difference_energy}
     \cE(\psi_{p} \circ u)  =    \cE(u) + \langle p-q, [\partial u] \rangle + g_{x}(q) - g_{y}(q) +g_{y}(p) - g_{x}(p).
\end{equation}
\end{lem}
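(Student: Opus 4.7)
The plan is to interpret the energy difference as the symplectic area swept out by the moving boundary $u|_{t=0}$ under the isotopy $\psi_{p_\tau}$, where $p_\tau$ is the affine straight line in $P$ from $q$ to $p$, and then to evaluate this area by lifting to the universal cover $T^*P$ of the fibres, on which $\omega$ is exact. Two applications of Stokes --- one on a three-dimensional cobordism, one on the resulting two-dimensional tube --- deliver the formula.

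Concretely, set $V\co\Strip\times[0,1]\to X$, $V(s,t,\tau) = \psi_{p_\tau}(u(s,t))$, which equals $u$ at $\tau=0$ (since $\psi_q = \id$) and $\psi_p\circ u$ at $\tau=1$. Closedness of $\omega$ yields $\int_{\partial(\Strip\times[0,1])} V^*\omega = 0$. The face $\{t=1\}$ maps into $\phi(L)$ by the tameness hypothesis and contributes nothing by the Lagrangian condition; the asymptotic faces $\{s=\pm\infty\}$ collapse onto the one-dimensional paths $\tau \mapsto \psi_{p_\tau}(x),\, \psi_{p_\tau}(y)$ and so also contribute nothing; the faces $\{\tau = 0\}$ and $\{\tau=1\}$ recover $\cE(u)$ and $\cE(\psi_p\circ u)$ with opposite orientation signs. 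Up to an overall sign, therefore, $\cE(\psi_p\circ u) - \cE(u)$ equals the area of the tube $\sigma\co\bR\times[0,1]\to X_P$ defined by $\sigma(s,\tau) = \psi_{p_\tau}(u(s,0))$.

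To compute this tube area, lift $\sigma$ through the covering $T^*P\to X_P = T^*P/T^*_\bZ P$; this lift is uniquely determined once we fix that the path $s\mapsto u(s,0)$ at $\tau=0$ lifts to a path in $T^*_q P$ starting at $dg_x(q)$. By the definition of $[\partial u]\in H_1(F_q,\bZ) = T^*_{\bZ,q}P$ --- as the monodromy of the loop obtained by concatenating the linear paths $tdg_x$, $tdg_y$ with $u|_{t=0}$ --- this lifted path ends at $dg_y(q) + [\partial u]$. The family $\psi_{p_\tau}$ admits a unique continuous family of $\bZ^n$-equivariant lifts $\tilde\psi_{p_\tau}$ on $T^*P$ with $\tilde\psi_q = \id$; since $\psi_{p_\tau}$ preserves $\phi(L)$, shrinking $P$ until the sheets of $\phi(L)$ through $x$ and $y$ extend over all of $P$ as the graphs of $dg_x$ and $dg_y$ forces $\tilde\psi_{p_\tau}(dg_x(q)) = dg_x(p_\tau)$ and $\tilde\psi_{p_\tau}(dg_y(q)+[\partial u]) = dg_y(p_\tau) + [\partial u]$.

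Since $\omega = d\theta$ on $T^*P$ for the canonical $1$-form $\theta$, Stokes' theorem expresses the area of the lifted tube as a boundary integral of $\theta$. The segments at $\tau = 0$ and $\tau = 1$ lie in the fibres $T^*_q P$ and $T^*_p P$, along which $\theta$ vanishes. The segment at $s = -\infty$, parametrised as $\tau \mapsto dg_x(p_\tau)$, contributes
\[
\int_0^1 dg_x(p_\tau)\cdot\dot p_\tau\, d\tau = g_x(p) - g_x(q),
\]
and the segment at $s = +\infty$, parametrised as $\tau\mapsto dg_y(p_\tau)+[\partial u]$, contributes $g_y(p) - g_y(q) + \langle p-q, [\partial u]\rangle$ after using the same identity plus the constant $[\partial u]\cdot\dot p_\tau$ term. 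Summing with the correct induced orientations reproduces \eqref{eq:difference_energy}. The principal obstacle is a consistent bookkeeping of signs and orientations across the two Stokes applications; the only geometric input is the observation that, under tameness and for $P$ sufficiently small, the lifts of the intersection points $x$ and $y$ are transported by $\tilde\psi_{p_\tau}$ along the graph sections of $\phi(L)$.
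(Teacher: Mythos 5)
Your proposal is correct and follows essentially the same route as the paper: both identify $\cE(\psi_{p}\circ u)-\cE(u)$ with the area of the strip swept out over the linear path from $q$ to $p$ between the sheets $dg_{x}$ and $dg_{y}$ in the class $[\partial u]$, invoking homotopy invariance of the topological energy with Lagrangian boundary conditions. Your second Stokes computation with the Liouville form on $T^{*}P$ simply makes explicit the cylinder and strip areas that the paper asserts geometrically (the aside about $\bZ^{n}$-equivariant lifts of $\psi_{p_\tau}$ is unnecessary, since lifting the tube from its simply connected domain already suffices).
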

\begin{proof}

Consider the linear path in $\Q$ from $q$ to $p$. The term $ \langle p-q, [\partial u] \rangle $  is the area of a cylinder in $X$,  lying over this path, and which intersects each fibre in a circle of homotopy class $[\partial u]$. The terms $ g_{x}(p) -  g_{x}(q)  $  and $g_{y}(p) - g_{y}(q)$ are respectively the areas of strips over this path whose intersections with each fibre are the paths from the basepoint to the intersection of each fibre with the local sheets of $\phi(L)$ labelled $x$ and $y$. The right hand side is therefore the sum of the area of $u$ with that of a strip in $X$, which intersects each fibre along the segment from $q$ to $p$ in a path from the intersection with $x$ to the intersection with $y$,  lying in the homotopy class of $ u|\bR \times \{0 \}$.

The result of gluing these two strips is homotopic to $\psi_{p} \circ u $, and Equation \eqref{eq:difference_energy} follows from the invariance of the topological energy under homotopies with fixed Lagrangian boundary conditions.
\end{proof}

By the previous result, the contributions of a curve $u$ to the differentials on the two sides of Equation \eqref{eq:iso_modules} differ by multiplication by
\begin{equation} \label{eq:difference_actions_move_point}
T^{g_{x}(q) - g_{y}(q) +g_{y}(p) - g_{x}(p)}.
\end{equation}
 This readily implies that the pre-composition of the isomorphism in Equation \eqref{eq:iso_modules} with multiplication by
\begin{equation} \label{eq:restriction_map_action}
T^{g_{x}(q) - g_{x}(p)}
\end{equation}
on the $\Lambda \otimes_{\bZ}   \delta_{x}   $ factor is a cochain isomorphism (the disappearance of $\langle p-q, [\partial u] \rangle$ is explained by Remark \ref{rem:change_coordinates}). Gromov compactness applied for all fibres $F_{p}$ over the polygon $P$ implies:
\begin{prop}[c.f. \cite{Fukaya-cyclic}]
  For every pair of intersections $(x,y)$ the series
  \begin{equation}
    \sum_{u \in \scrM^{q}(x,y)} T^{\cE (u)} z^{[\partial u]} 
  \end{equation}
is convergent in $Y_{P}$. \qed
\end{prop}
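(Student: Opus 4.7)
The plan is to verify, directly from the convergence condition following Equation \eqref{eq:element_of_reg_function_p}, that the series represents an element of $\scrO_{P}$. Writing it as $\sum_{A \in H_{1}(F_{q};\bZ)} f_{A}\, z_{q}^{A}$ with $f_{A} = \sum_{u:\,[\partial u] = A} T^{\cE(u)}$, I need to check that each $f_{A}$ lies in $\Lambda$ and that $\val(f_{A}) + \langle v, A\rangle \to +\infty$ as $|A| \to \infty$ for every $v \in P$.

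The central device will be Lemma \ref{lem:area_argument_Floer_differential}, which rewrites, for $p = q + v$ with $v \in P$,
\begin{equation*}
  \cE(u) + \langle v, [\partial u]\rangle = \cE(\psi_{p} \circ u) - \bigl(g_{x}(p) - g_{y}(p) + g_{y}(q) - g_{x}(q)\bigr).
\end{equation*}
Thus, up to an additive constant depending only on $(x,y,p,q)$ and independent of $u$, the total valuation of the term $T^{\cE(u)} z^{[\partial u]}$ at the point of $Y_{P}$ with base $p$ equals the energy of $\psi_{p}\circ u$. Because $\psi_{p}$ maps $F_{q}$ to $F_{p}$, preserves $\phi(L)$, and, by the tameness clause of Definition \ref{def:conv-floer-diff}, conjugates $J_{t}$ to a tame almost complex structure, the assignment $u \mapsto \psi_{p}\circ u$ is a bijection from $\scrM^{q}(x,y)$ onto the moduli space of $(\psi_{p}^{-1})^{*} J_{t}$-holomorphic strips connecting the corresponding intersection points of $F_{p}$ and $\phi(L)$.

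I then invoke Gromov compactness for strips with boundary on the fixed compact Lagrangians $F_{p}$ and $\phi(L)$, applied to the smoothly varying family $\{(\psi_{p}^{-1})^{*} J_{t}\}_{p \in P,\, t \in [0,1]}$ of tame almost complex structures: for every energy bound $E$, only finitely many such strips have energy at most $E$. Transporting this finiteness back through the bijection gives that only finitely many $u \in \scrM^{q}(x,y)$ satisfy $\cE(u) + \langle v, [\partial u]\rangle \leq E$ for each $v \in P$. Specialising to $v = 0$ and restricting to $[\partial u] = A$ yields that each $f_{A}$ lies in $\Lambda$, and letting $v$ vary over $P$ yields the required limit $\val(f_{A}) + \langle v, A\rangle \to +\infty$ as $|A| \to \infty$.

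The genuine obstacle here is organisational rather than analytic: one must arrange the two layers of the sum, over $A$ and then over $u$ with $[\partial u] = A$, so that Gromov compactness delivers exactly the two conditions defining the affinoid ring $\scrO_{P}$. The tameness hypothesis in Definition \ref{def:conv-floer-diff}, together with compactness of the polytope $P$, makes the parametrised version of Gromov compactness apply with no further analytic work. Beyond this bookkeeping, the argument is the one Fukaya used in \cite{Fukaya-cyclic} to prove convergence of the self-Floer differential at a single fibre, now pulled back across the base via the diffeomorphisms $\psi_{p}$.
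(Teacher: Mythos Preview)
Your proposal is correct and follows essentially the same approach as the paper: the paper's proof is the paragraph immediately preceding the proposition, which uses Lemma~\ref{lem:area_argument_Floer_differential} to identify the evaluation of the series at $(p,b')$ with the Floer differential for $F_{p}$ via the bijection $u \mapsto \psi_{p}\circ u$, and then invokes Gromov compactness over all $p \in P$. Your write-up is simply a more explicit unpacking of this into the two defining conditions of $\scrO_{P}$, with the same key ingredients in the same order.
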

As a consequence, Equation \eqref{eq:differential_local} defines a differential on the complex 
\begin{equation} 
\cL(Y_P; D_{P}) \equiv \bigoplus_{x \in F_{q} \cap \phi(L)} \scrO_{P} \otimes_{\bZ}  \delta_{x} .
\end{equation}
It will be convenient to drop the Floer data from the notation whenever they are   unambiguously given.

Given an  inclusion of polytopes $ P' \to P$, with basepoints $q$ on $P$ and $q'$ on $P'$, there are restricted data
\begin{equation}
D_{P}|_{P'} \equiv ( \triv_{P}|_{X_{P'}}, \phi,(\psi_{q'}^{-1})^{*}  J, \{g_{x} \}  ) .
\end{equation}
Multiplication of each summand by Equation \eqref{eq:restriction_map_action}  defines a cochain map
\begin{equation}
\cL(Y_P; D_{P})  \to \cL(Y_{P'}; D_{P}|_{P'}).
\end{equation}

\subsection{Change of trivialisation}
Any pair of fibrewise identifications
\begin{align}
\triv_{i} \co X_{P}  & \cong T^*P/ T^{*}_{\bZ} P, \, i \in \{1,2 \}.
\end{align}
differ by the  differential of a function $
f \co P \to \bR$. With respect to two such trivialisations, choose functions $g^{1}_{x}$ and $g^{2}_{x}$ defining every local section of $\phi(L)$, and consider the two sets of data
\begin{align}
  D^{i}  = (\triv_{i}, \phi,  J, \{ g^{i}_{x} \}) , \, i \in \{1,2 \}.
\end{align}

Define the \emph{change of trivialisation} cochain map  
\begin{equation} 
\cL(Y_P; D^{1}) \to \cL(Y_P; D^{2}) 
\end{equation}
as a diagonal map given on the factor $\delta_{x}$ by multiplication with $
  T^{f(q)} z_q^{ d f - dg^{1}_{x} + dg^{2}_{x}} $.

Since this map does not entail counting any holomorphic curves, it is easy to check that given an inclusion $P' \subset P$, we have a commutative diagram 
\begin{equation}
  \xymatrix{ \cL(Y_P; D^{1}) \ar[r] \ar[d]& \cL(Y_P; D^{2}) \ar[d] \\
\cL(Y_{P'};  D^{1}|_{P'} ) \ar[r] & \cL(Y_{P'}; D^{2}|_{P'} ).}
\end{equation}

\subsection{Continuation maps} \label{sec:continuation-maps}

Let $D^{+}$ and $D^{-}$ denote Floer data $ (\triv,  \phi^{\pm}, J^{\pm}, \{g_{x_{\pm}} \}  )$, 
which share a common trivialisation. This section recalls the construction of the continuation map 
\begin{equation} 
  CF^*( F_{q}, \phi^{+}(L)) \to CF^*( F_{q}, \phi^{-}(L))
\end{equation}
as a count of pseudo-holomorphic sections of the projection $
X \times \Strip \to \Strip$.

Pick a path of Hamiltonian diffeomorphisms $\phi^{s}$ such that
\begin{equation} \label{eq:condition_continuation_Hamiltonian}
  \phi^{s} = \begin{cases} \phi^{+} \textrm{ if } 0 \ll s \\
\phi^{-} \textrm{ if } s \ll 0.
\end{cases}
\end{equation}
Recall that there is a unique function $H$ on $X \times \bR$ which generates this flow such that
\begin{equation}\label{eq:normalisation_Hamiltonian}
  \int_{X} H_{s} \omega^{n}  = 0,
\end{equation}
and pick a compactly supported function
\begin{equation}
  G \co X \times B \to \bR,
\end{equation}
which agrees with $H$ on $X \times \bR \times \{1\}$.

If $\alpha$ is a symplectic form on $B$ of finite area, the $2$-form
\begin{equation} \label{eq:form_on_product}
\omega_{X} - dG \wedge ds + C  \alpha
\end{equation}
defines a symplectic structure on $X \times B$ whenever $C$ is a sufficiently large constant. We denote by $\tilde{\scrJ}$ the space of almost complex structures $\tilde{J}$ on $\Strip \times X$ which are of the form
\begin{equation}
\tilde{J} =  \begin{pmatrix}  J & K \\
  0 & j \end{pmatrix} 
\end{equation}
with $J \in \scrJ$, and $K$ vanishes outside a compact set in $\Strip$. Any such almost complex structure will be tamed by the symplectic structure in Equation \eqref{eq:form_on_product} whenever $C$ is sufficiently large. 

We choose an almost complex structure $\tilde{J} \in \tilde{\scrJ}$  whose restrictions to $0 \ll s$ and $s \ll 0$ agree with
\begin{equation} \label{eq:tame_structure_upper_triangular}
\tilde{J}^{\pm} =  \begin{pmatrix}  J^{\pm} & 0  \\
  0 & j \end{pmatrix} .
\end{equation}
\begin{defin} \label{def:elementary_contuation}
  An \emph{elementary continuation datum} $D^{+-}$ from $D^{+}$ to $D^{-}$ is a choice of  the pair of data $(\{\phi^{s}\}, \tilde{J} )$ above.
\end{defin}

For each pair $(x_-, x_+)$ of intersections points, we then define $\scrM^{q}_{\kappa}(x_{-}, x_{+}) $ with respect to the data $D^{+-}$ to be the moduli space of maps $v \co \Strip \to  X$ with $\tilde{J}$-holomorphic graph $  \tilde{v} \co \Strip \to  \Strip \times X$ such that
\begin{align} \label{eq:maps_strips_x_pm}
 v(s,0)  & \in F_{q} \\
v(s,1) & \in \phi^{s} L \\
\lim_{s \to \pm \infty} v(s,t) & = x_{\pm} .
\end{align}

For a generic almost complex structure $\tilde{J}$, these moduli spaces are regular. Computing the linearisation of the index of a solution shows that
\begin{equation}
  \dim( \scrM^{q}_{\kappa}(x_{-}, x_{+}) ) = \deg(x_-) - \deg(x_+).   
\end{equation}
The (topological) energy of a solution to the continuation equation is
\begin{equation}
\cE(v) = \int_{B} v^{*}(\omega) - \int_{\bR}   H_{s}(v(s,1))  ds = \int_{B} \tilde{v}^{*}\left(\omega - dG \wedge  ds \right)
\end{equation}
\begin{lem} \label{lem:bounded_energy_fixed_p}
If $\deg(x_-) = \deg(x_+)$, then for any real number $E$, there are only finitely many elements $v$ of $ \scrM^{q}_{\kappa}(x_{-}, x_{+})  $ such that
\begin{equation}
\cE(v) \leq E.  
\end{equation}
\end{lem}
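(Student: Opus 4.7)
My plan is to decompose $\scrM^q_\kappa(x_-, x_+)$ into strata indexed by relative homotopy classes of the map, and show both that only finitely many classes admit solutions of energy at most $E$, and that each such stratum is finite.

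First, the set of homotopy classes of maps $v\co \Strip \to X$ satisfying \eqref{eq:maps_strips_x_pm} forms a torsor over $H_1(F_q;\bZ)$: any two such maps are comparable by concatenation along the bottom boundary with a loop in $F_q$. Fix a reference class $A_0$, and write $\scrM^q_\kappa(x_-, x_+; A_0 + \beta)$ for the subspace realising the class $A_0 + \beta$, $\beta \in H_1(F_q;\bZ)$. I would then establish an analogue of Lemma \ref{lem:area_argument_Floer_differential} in the continuation setting: for $v_\beta, v_{\beta'}$ in classes differing by $\beta - \beta' \in H_1(F_q;\bZ)$, the topological energies satisfy
\[
\cE(v_\beta) - \cE(v_{\beta'}) = \langle q, \beta - \beta' \rangle
\]
under the identification $T_q\Q = H^1(F_q;\bR)$, where $q$ is viewed as a point in $P$. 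The Hamiltonian perturbation term $\int H_s\, ds$ along the $\phi^s(L)$-boundary is the same for both strips (it depends only on the $s$-parametrised path traced along $\phi^s(L)$, not on the winding of the $F_q$-boundary), so it cancels in the comparison. The displayed formula shows that the condition $\cE(v) \leq E$ restricts $\beta$ to a bounded subset of a lattice torsor, and hence to finitely many classes.

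Second, for each fixed class $A$, I would show $\scrM^q_\kappa(x_-, x_+; A)$ is finite. By genericity of $\tilde{J}$, this space is a smooth manifold of dimension $\deg(x_-) - \deg(x_+) = 0$. Gromov compactness applies thanks to the energy bound, and I must rule out all degenerations. Disc bubbles on $\phi^s(L)$ are forbidden by tautological unobstructedness \eqref{eq:everything_is_good_with_curves} (using that $\phi^s$ is compactly supported isotopic to $\phi^\pm$ and that the almost complex structures agree with $J^\pm$ outside a compact $s$-region); disc bubbles on $F_q$ are forbidden because the hypothesis $\pi_2(\Q) = 0$ implies $\pi_2(X, F_q) = 0$; sphere bubbles occur in positive codimension and are avoided by generic choice of $\tilde{J}$. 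Strip breaking is excluded by the index: any broken configuration involves at least one nonconstant Floer strip, which after quotienting by $\bR$ carries index $\geq 1$, incompatible with the continuation strip having total index $0$. Hence $\scrM^q_\kappa(x_-, x_+; A)$ is a compact $0$-dimensional manifold, i.e.\ finite. Combined with the finiteness of relevant homotopy classes, this yields the lemma.

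The main obstacle is making the energy-versus-class formula precise in the continuation setting. The proof of Lemma \ref{lem:area_argument_Floer_differential} works by cutting the strip into pieces lying over a path in $\Q$; the extra Hamiltonian term $-\int H_s(v(s,1))\, ds$ must be handled via the same cut-and-paste to confirm it is unaffected by wrapping the $F_q$-boundary. Verifying this, together with bookkeeping of the action primitives $g_{x_\pm}$ at each end, is the technical heart of the argument.
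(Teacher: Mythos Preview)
Your argument has a genuine gap in the first step. The topological energy $\cE$ is a homotopy invariant of the strip, and hence depends \emph{affine linearly} on the homotopy class. Your own formula
\[
\cE(v_\beta) - \cE(v_{\beta'}) = \langle q, \beta - \beta' \rangle
\]
exhibits this: whatever element of $H^1(F_q;\bR)$ the symbol $q$ denotes, $\beta \mapsto \langle q,\beta\rangle$ is a linear functional on the lattice. The sublevel set $\{\beta : \cE(v_\beta) \leq E\}$ is then either a half-space or all of $H_1(F_q;\bZ)$, never a bounded set. So the sentence ``the condition $\cE(v)\leq E$ restricts $\beta$ to a bounded subset of a lattice torsor'' is simply false, and the decomposition-by-class strategy collapses. (There is also an issue with the claim that the classes form a torsor over $H_1(F_q;\bZ)$ alone: the winding of the top boundary in $L$ enters as well, and under the hypothesis $\pi_2(X,F_q)=0$ the two windings are tied together, so the parametrisation needs more care. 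But this is secondary to the energy issue.)

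The paper's proof avoids any class-by-class analysis. It passes to the graph $\tilde v$ inside $X\times B$, where $\tilde J$ is tamed by the genuine symplectic form $\omega_X - dG\wedge ds + C\alpha$ of Equation~\eqref{eq:form_on_product}. The $\tilde J$-area of the graph differs from $\cE(v)$ by the fixed constant $C\int_B\alpha$, so Gromov compactness applies directly across all homotopy classes at once and shows that energy is proper on the Gromov--Floer compactification. One then rules out the boundary strata by the same transversality and unobstructedness arguments you use in your second step. Your second step is fine; it is the reduction to finitely many classes that cannot be rescued without invoking compactness across classes, at which point one is essentially reproducing the paper's argument.
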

\begin{proof}
By adding to $\cE$ the total area of the form $C \alpha$ in Equation \eqref{eq:form_on_product}, we obtain the area of $\tilde{v}$ with respect to a symplectic structure on $X \times B$ for which $\tilde{J}$ is tame. A standard application of Gromov compactness therefore implies that the energy  is proper on the Gromov-Floer compactification.

Having excluded bubbling via Assumption \eqref{eq:everything_is_good_with_curves}, the only broken curves in the limit arise when some energy escapes along the ends, which gives rise to components of hypothetical broken curves that are graphs of Floer trajectories. Since the moduli space of Floer trajectories is assumed to be regular, this is impossible whenever the moduli space we are considering has vanishing virtual dimension.
\end{proof}

As in the setting of Floer trajectories, an element $v \in \scrM^{q}_{\kappa}(x_{-}, x_{+})$  induces a canonical map
\begin{equation}
  \kappa_{v} \co \delta_{x_+} \to \delta_{x_-}
\end{equation}
whenever $\deg(x_+) = \deg(x_-)$. Taking the sum over all elements of such rigid moduli spaces defines the continuation map
\begin{align} 
\kappa \co  CF^*(F_{q},  \phi^{+}(L)) & \to CF^*( F_{q}, \phi^{-}(L)) \\
\kappa| \delta_{x_+} & = \sum_{\substack{ \deg(x_+) = \deg(x_-)\\ v \in \scrM^{q}_{\kappa}(x_{-}, x_{+}) }}T^{\cE(v)} \kappa_{v}.
\end{align}

\subsection{Convergence of continuation maps}
\label{sec:conv-cont-maps}

Let $P$ be a polytope based at $q$, with the property that $\cL(Y_P; D^{\pm})$ are well defined. Pick diffeomorphisms $\psi^{\pm}$ as in Definition \ref{def:conv-floer-diff}, and  extend them to a family
\begin{align} \label{eq:diffeo_parametrised_strip}
\Psi \co P \times \Strip & \to \Diff(X) \\  \label{eq:identity_at_basepoint}
\{ q \} \times \Strip & \mapsto \Id
\end{align}
such that the following properties hold for all $p \in P$ (see Figure \ref{fig:conv_continuation}):
\begin{align} 
 \label{eq:diffeo_Lagr_boundary}
\Psi_{p,s,t}  &= \psi^{+}_{p}  \textrm{ if } 0 \ll s \\
\Psi_{p,s,t} & = \psi^{-}_{p} \textrm{ if } s \ll 0 \\ \label{eq:diffeo_maps_fibre_to_fibre}
\Psi_{p,s,0}(F_{q})  & = F_{p}  \\ \label{eq:diffeo_indep_outside_cpct}
\Psi_{p,s,1} & = \Id \textrm{ if }  H_{s} \not \equiv 0.
\end{align}
\begin{figure}
\centering
\begin{tikzpicture}
\begin{scope}                          
\draw[line width=4*\lw] (-4,0)--(4,0);
\draw[line width=4*\lw] (-4,2)--(4,2);

\draw[thick,blue] (-1,3/2)--(-1,2);
\draw[thick,blue] (-1/2,1) arc (-90:-180:1/2);
\draw[thick,blue] (-1/2,1)--(1/2,1);
\draw[thick,blue] (1/2,1) arc (-90:0:1/2);
\draw[thick,blue] (1,3/2)--(1,2);

\draw[thin,blue] (3/2,2)--(3/2,0);
\draw[thin,blue] (-3/2,2)--(-3/2,0);

\node[label=center:{$\psi^{+}_{p}$}] at (3,1) {};
\node[label=center:{$\psi^{-}_{p}$}] at (-3,1) {};
\node at (0,3/2) {$\id$};

\end{scope}
\end{tikzpicture}
\caption{}
\label{fig:conv_continuation}
\end{figure}

For each $p  \in P$, denote by $\tilde{\Psi}_{p}$ the fibrewise diffeomorphism of $\Strip \times X$
\begin{equation}
(s,t,x) \mapsto (s,t,\Psi_{p,s,t}(x)).
\end{equation}
Define $\tilde{J}_{p} = \left(\tilde{\Psi}_{p}^{-1}\right)^{*}\tilde{J}$. This is an almost complex structure on $\Strip \times X$ which has the upper triangular form required in Equation \eqref{eq:tame_structure_upper_triangular}.

\begin{defin} \label{def:conv-cont-maps}
The continuation data $D^{+-}$   \emph{are tame in $P$} if $ \tilde{J}_{p} $   lies in $\tilde{\scrJ}$ for all $p \in P$.
\end{defin}
Of course, tameness depends on the choice of the fibrewise diffeomorphism $\Psi$ of $\Strip \times X$, but as this will be clear from the context, it is omitted. Condition  \eqref{eq:diffeo_indep_outside_cpct} ensures that the off-diagonal term in $ \tilde{J}_{p} $ vanishes outside a compact set.  Openness of the taming condition implies:
\begin{lem}
If  $P$ is sufficiently small the data $D^{+-}$ are tame in $P$. \qed
\end{lem}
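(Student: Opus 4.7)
The plan is to verify, for $P$ chosen as a sufficiently small neighbourhood of $q$, each of the defining conditions of $\tilde{\scrJ}$ for the pullback $\tilde{J}_p = (\tilde{\Psi}_p^{-1})^* \tilde{J}$, exploiting the fact that $\tilde{\Psi}_p$ is fibrewise over $\Strip$ and that tameness is an open condition.

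First I would observe that the upper triangular form is automatic. Because $\tilde{\Psi}_p$ covers the identity on $\Strip$, its differential preserves the vertical distribution tangent to the $X$-fibres of $\Strip \times X \to \Strip$. Saying that $\tilde{J}$ is upper triangular in the given block decomposition is equivalent to saying that this vertical distribution is $\tilde{J}$-invariant, and such invariance is preserved under pullback by a fibrewise map. So $\tilde{J}_p$ has block form $\begin{pmatrix} J_p & K_p \\ 0 & j \end{pmatrix}$ with $J_p$ a family of almost complex structures on $X$ parametrised by $(p,s,t)$.

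Next, I would verify that $K_p$ remains compactly supported. Condition \eqref{eq:diffeo_Lagr_boundary} and its mirror for $s \ll 0$ ensure that $\Psi_{p,s,t}$ is independent of $(s,t)$ for $|s|$ large, so the partial derivatives $\partial_s \Psi$ and $\partial_t \Psi$---which are the only terms that can produce a new off-diagonal contribution beyond the compactly supported $K$---vanish outside a compact subset of $\Strip$. The support of $K_p$ is therefore contained in a compact set, uniformly in $p$.

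The substantial point is tameness of $J_p(s,t)$ with respect to $\omega$, uniformly over $(s,t) \in \Strip$ and $p \in P$. I would split $\Strip$ into three regions. For $s \gg 0$ one has $\Psi_{p,s,t} = \psi^+_p$ and hence $J_p(s,t) = (\psi^+_p)^* J^+_t$, which is tame by the standing hypothesis that $D^+$ is tame in $P$ (the $s \ll 0$ case is symmetric, using $D^-$). On the complementary compact region of $\Strip$, at $p = q$ the map $\Psi_{q,s,t}$ is the identity by \eqref{eq:identity_at_basepoint}, so $J_q(s,t) = J_{s,t}$, which is tame by the hypothesis that $\tilde{J} \in \tilde{\scrJ}$. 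Since $(p,s,t) \mapsto J_p(s,t)$ is continuous, and $\omega$-tameness is an open condition on almost complex structures on the compact manifold $X$, uniform tameness of $J_p(s,t)$ on the compact middle region is preserved after shrinking $P$ to a sufficiently small neighbourhood of $q$.

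The only point demanding care is this last step---ensuring tameness holds uniformly in $(s,t)$ despite $\Strip$ being non-compact. The conditions \eqref{eq:diffeo_Lagr_boundary}--\eqref{eq:diffeo_indep_outside_cpct} reduce the question to a uniform continuity and openness argument on a compact subregion of $\Strip$, which is routine.
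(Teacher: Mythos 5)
Your proof is correct and follows essentially the same route as the paper, which offers no written argument beyond observing that $\tilde{J}_p$ automatically has the upper-triangular form with compactly supported off-diagonal term and then invoking ``openness of the taming condition'' (using $\Psi_{q,s,t}=\Id$ and, for $|s|$ large, the tameness built into the choice of $\psi^{\pm}$ from Definition \ref{def:conv-floer-diff}). You have simply spelled out in detail the three checks the paper leaves implicit.
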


Define $ \scrM^{p}_{\kappa}(x_{-}, x_{+}) $ to be the space of maps from a strip to $X$, with boundary conditions $F_{p}$ and $\phi^{s} (L)$, converging to $x_{\pm}$ at the respective ends, whose graphs are $  \tilde{J}_{p}$-holomorphic.
\begin{lem}
Composition with $\tilde{\Psi}_{p}$ defines a bijection 
\begin{equation}
\scrM^{q}_{\kappa}(x_{-}, x_{+}) \cong \scrM^{p}_{\kappa}(x_{-}, x_{+}).
\end{equation}
\end{lem}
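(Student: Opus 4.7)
The plan is to exhibit the bijection explicitly via pointwise composition with $\tilde{\Psi}_p$. Given $v \in \scrM^q_\kappa(x_-, x_+)$, I would define $v^p \co \Strip \to X$ by $v^p(s,t) = \Psi_{p,s,t}(v(s,t))$, so that its graph is $\tilde{\Psi}_p \circ \tilde{v}$. The inverse is constructed identically using $\tilde{\Psi}_p^{-1}$, so once I check that $v^p$ lies in $\scrM^p_\kappa(x_-, x_+)$, the claimed bijection follows.

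First I would verify the boundary conditions. The condition $v^p(s,0) \in F_p$ follows immediately from $v(s,0) \in F_q$ together with property \eqref{eq:diffeo_maps_fibre_to_fibre}. For the Lagrangian boundary condition $v^p(s,1) \in \phi^s L$, I would split into two cases: where $H_s \equiv 0$ (i.e., $s \ll 0$ or $0 \ll s$), property \eqref{eq:diffeo_Lagr_boundary} gives $\Psi_{p,s,t} = \psi^\pm_p$, and the requirement in Definition \ref{def:conv-floer-diff} that $\psi^\pm_p$ preserves $\phi^\pm(L)$ takes care of the condition since $\phi^s = \phi^\pm$ there; where $H_s \not\equiv 0$, property \eqref{eq:diffeo_indep_outside_cpct} gives $\Psi_{p,s,1} = \id$ and the condition holds trivially.

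Next I would verify the holomorphicity of the graph. Since $\tilde{J}_p = (\tilde{\Psi}_p^{-1})^* \tilde{J} = (\tilde{\Psi}_p)_* \tilde{J}$ by definition, the pushforward relation $d\tilde{\Psi}_p \circ \tilde{J} = \tilde{J}_p \circ d\tilde{\Psi}_p$ implies that $\tilde{J}$-holomorphicity of $\tilde{v}$ is equivalent to $\tilde{J}_p$-holomorphicity of $\tilde{\Psi}_p \circ \tilde{v}$. For the asymptotic limits, property \eqref{eq:diffeo_Lagr_boundary} gives that for $s \to \pm\infty$, $\Psi_{p,s,t}$ is the fixed diffeomorphism $\psi^\pm_p$, so $\lim_{s \to \pm\infty} v^p(s,t) = \psi^\pm_p(x_\pm)$; these are the intersection points of $F_p$ with $\phi^\pm(L)$ corresponding to $x_\pm$ under the bijection induced by $\psi^\pm_p$, which is the implicit identification used in writing $\scrM^p_\kappa(x_-, x_+)$.

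There is no real obstacle here; the statement essentially unpacks the definition of $\tilde{J}_p$ combined with the compatibility properties \eqref{eq:diffeo_Lagr_boundary}--\eqref{eq:diffeo_indep_outside_cpct} imposed on $\Psi$. The only subtlety worth flagging in the write-up is the abuse of notation identifying $x_\pm \in F_q \cap \phi^\pm L$ with $\psi^\pm_p(x_\pm) \in F_p \cap \phi^\pm L$, so that the same symbols $x_\pm$ index the asymptotes on both sides of the bijection.
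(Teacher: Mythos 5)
Your proposal is correct and takes essentially the same approach as the paper: the paper's proof likewise reduces the claim to the compatibility of $\tilde{\Psi}_{p}$ with the boundary conditions, invoking \eqref{eq:diffeo_indep_outside_cpct} for the boundary condition along $\bR \times \{1\}$ and \eqref{eq:diffeo_maps_fibre_to_fibre} for the boundary condition along $\bR \times \{0\}$. The extra points you spell out --- $\tilde J_{p}$-holomorphicity of $\tilde{\Psi}_{p} \circ \tilde{v}$ from the definition $\tilde{J}_{p} = (\tilde{\Psi}_{p}^{-1})^{*}\tilde{J}$, and the identification of the asymptotes $x_{\pm}$ with $\psi^{\pm}_{p}(x_{\pm})$ --- are left implicit in the paper.
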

\begin{proof}
The key point is that the diffeomorphism $\tilde{\Psi}_{p}$ is compatible with the Lagrangian boundary conditions. In particular, given $v \in \scrM^{q}_{\kappa}(x_{-}, x_{+})  $, Equation \eqref{eq:diffeo_indep_outside_cpct} ensures that the boundary condition of $  \tilde{\Psi}_{p} \circ \tilde{v} $ along $\bR \times \{ 1\}$ is the path $\phi^{s}(L)$, and Equation \eqref{eq:diffeo_maps_fibre_to_fibre} ensures that the boundary condition along $\bR \times \{ 0\}$ is $F_{p}$.
\end{proof}

The advantage of introducing both moduli spaces is that we have independent energy estimates:
\begin{lem} \label{lem:bounded_energy_move_p}
If $\deg(x_-) = \deg(x_+)$, the topological energy defines a proper map on $ \scrM^{p}_{\kappa}(x_{-}, x_{+})  $.
\end{lem}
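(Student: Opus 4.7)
My plan is to mirror the proof of Lemma \ref{lem:bounded_energy_fixed_p} directly on the $p$-side, with $\tilde{J}$ replaced everywhere by $\tilde{J}_{p}$. The key input which makes this possible is Definition \ref{def:conv-cont-maps}: tameness of $D^{+-}$ in $P$ was built precisely so that $\tilde{J}_{p} \in \tilde{\scrJ}$ for all $p \in P$. By openness of the taming condition and continuity of $p \mapsto \tilde{J}_{p}$ on a compact closure of $P$, we may choose a single constant $C$ (enlarging $C$ and, if necessary, shrinking $P$) such that $\tilde{J}_{p}$ is simultaneously tamed by the symplectic form $\omega_{X} - dG \wedge ds + C\alpha$ of Equation \eqref{eq:form_on_product} for every $p \in P$.

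With this in hand, the topological energy of $v' \in \scrM^{p}_{\kappa}(x_{-}, x_{+})$ satisfies
\begin{equation}
\cE(v') + C \int_{B} \alpha = \text{geometric area of } \tilde{v}'
\end{equation}
with respect to a tame symplectic form on $\Strip \times X$. Since $C \int_{B} \alpha$ is finite and fixed, an upper bound on $\cE(v')$ becomes an upper bound on area, and standard Gromov compactness produces a limit in the Gromov--Floer compactification.

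To conclude properness, I must exclude the extra strata. Sphere bubbling is precluded because $\pi_{2}(X) = 0$, as follows from the long exact sequence of the fibration together with $\pi_{2}(Q) = 0$ and $\pi_{2}(F_{p}) = 0$; disk bubbling on $F_{p}$ is excluded for the same reason. Disk bubbles on $\phi^{s}(L)$ are eliminated by transporting them back to the $q$-setup via $\tilde{\Psi}_{p}^{-1}$: at any $s$ where bubbling occurs, either $H_{s} \not\equiv 0$ and $\Psi_{p,s,1} = \Id$ by Equation \eqref{eq:diffeo_indep_outside_cpct}, or $s$ is in the constant region and $\Psi_{p,s,1} = \psi^{\pm}_{p}$ preserves $\phi^{\pm}(L)$ by Definition \ref{def:conv-floer-diff}. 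In either case the pulled-back bubble becomes a $\tilde{J}$-holomorphic disk on $\phi^{s}(L)$, which Assumption \eqref{eq:everything_is_good_with_curves} rules out. Finally, strip breaking at the ends is impossible in virtual dimension zero, exactly as in the proof of Lemma \ref{lem:bounded_energy_fixed_p}.

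The main delicate point is step (c) above, namely confirming that bubbles on the moving Lagrangian $\phi^{s}(L)$ really do pull back under $\tilde{\Psi}_{p}$ into the setting where tautological unobstructedness applies. Once this is verified, the rest of the argument is a formal copy of the $q$-case; the proof could even be shortened to a one-line invocation of Lemma \ref{lem:bounded_energy_fixed_p} applied to the tame almost complex structure $\tilde{J}_{p}$ on $\Strip \times X$ with boundary conditions $F_{p}$ and $\phi^{s}(L)$, were it not for the need to record why unobstructedness still excludes bubbling on these Lagrangians.
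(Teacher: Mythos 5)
Your argument is correct and follows the paper's route: the paper likewise reduces to the proof of Lemma \ref{lem:bounded_energy_fixed_p}, noting that Condition \eqref{eq:diffeo_indep_outside_cpct} keeps the off-diagonal term of $\tilde{J}_{p}$ compactly supported so that energy escaping along the ends produces graphs of Floer trajectories, which regularity rules out in virtual dimension zero, while tameness of $\tilde{J}_{p}$ (Definition \ref{def:conv-cont-maps}) gives the area bound needed for Gromov compactness and Assumption \eqref{eq:everything_is_good_with_curves} (together with $\pi_2(X,F_p)=0$) excludes bubbling. Your extra care about a uniform taming constant and about transporting boundary bubbles back via $\tilde{\Psi}_{p}$ is already packaged into the standing hypothesis that the continuation data are tame in $P$, so no genuinely new ingredient is involved.
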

\begin{proof}
Condition  \eqref{eq:diffeo_indep_outside_cpct} ensures that the off-diagonal term in $ \tilde{J}_{p}$ vanishes outside a compact set, so escape of energy along the ends gives rise to Floer trajectories as in the proof of Lemma \ref{lem:bounded_energy_fixed_p}. The remainder of that proof applies mutatis mutandis.
\end{proof}

These results imply the existence of a cochain map
\begin{equation}
\kappa \co  \cL(Y_P; D^{+}) \to \cL(Y_P; D^{-})
\end{equation}
by counting solutions to continuation maps as follows: recall that the construction of the differential on the two $\scrO_{P}$ modules relied on choosing primitives defining the sheets of $\phi^{+}(L)$ and $\phi^{-}(L)$ over $P$. These primitives define a class $ [\partial v] \in H_{1}(F_{q}, \bZ) $ associated to the boundary of $v \in \scrM^{q}_{\kappa}(x_{-}, x_{+})  $.

The continuation map is defined on each factor by the formula
\begin{equation}
  \kappa | \delta_{x_+} = \bigoplus_{x_-} \sum_{v \in \scrM_{\kappa}(x_{-}, x_{+})  }  T^{\cE(v) } z^{[\partial v]} \otimes \kappa_{v} 
 \end{equation}
The argument proving the convergence of the differential applies verbatim in this case.

\begin{rem}
It is important to note that $\kappa$ is not defined over $\Lambda_0$ since the energy $\cE(v)$ for a solution to the continuation equation is not necessarily positive. This raises obstacles to using the methods developed here to detect quantitative information about Lagrangians in $X$, e.g. displacement energy as in \cite{FOOO-displace}.

\end{rem}

\subsection{Families of continuation maps} \label{sec:famil-cont-maps}
Let $\Delta$ be a compact manifold with boundary parametrising a family $\{ (\{\phi^{s}_{\delta}\}, \tilde{J}_{\delta} ) \}_{\delta \in \Delta}$ of continuation data as in Definition \ref{def:elementary_contuation}.

For each pair $(x_-, x_+)$ of intersections points, let $\scrM^{q}_{\kappa^{\delta}}(x_{-}, x_{+})$ denote the moduli space of continuation maps corresponding to $\delta \in \Delta$, and consider the parametrised space
\begin{equation}
  \scrM^{q}_{\kappa^{\Delta}}(x_{-}, x_{+}) \equiv  \coprod_{\delta \in \Delta}  \scrM^{q}_{\kappa^{\delta}}(x_{-}, x_{+}).
\end{equation}

Assuming the data are chosen generically, this is a manifold with boundary of dimension
\begin{equation}
\deg(x_-) - \deg(x_+)  + \dim(\Delta).
\end{equation}
In particular, if this moduli space is rigid, we may consider the series:
\begin{equation} \label{eq:higher_homotopy_Delta}
\sum_{v \in \scrM^{q}_{\kappa^{\Delta}}(x_{-}, x_{+})} T^{\cE(v)} z^{[\partial v]},
\end{equation}
where the class $[\partial v] \in H_{1}(F_{q}, \bZ)$ is defined as in Equation \eqref{eq:homology_class_strip}.

\begin{prop} \label{prop:famil-cont-maps}
There is a polytope $P_{\Delta} \subset \Q$ so that Equation \eqref{eq:higher_homotopy_Delta} defines an element of $\scrO_{P_{\Delta}}$.
\end{prop}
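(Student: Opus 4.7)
The plan is to parallel the convergence argument for continuation maps in Section \ref{sec:conv-cont-maps}, using compactness of $\Delta$ to obtain uniform versions of the tameness and energy estimates over the parameter space. First, for each $\delta \in \Delta$ I would construct a map $\Psi_{\delta} \co P \times \Strip \to \Diff(X)$ satisfying the analogues of \eqref{eq:diffeo_parametrised_strip}--\eqref{eq:diffeo_indep_outside_cpct} and depending smoothly on $\delta$. Setting $\tilde{J}_{p,\delta} = (\tilde{\Psi}_{p,\delta}^{-1})^{*} \tilde{J}_{\delta}$, the condition $\tilde{J}_{p,\delta} \in \tilde{\scrJ}$ is open in $(p,\delta)$ and holds at $p=q$ for every $\delta$. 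Compactness of $\Delta$ then yields a single integral affine polytope $P_{\Delta}$ based at $q$ on which $\tilde{J}_{p,\delta} \in \tilde{\scrJ}$ for all $(p,\delta) \in P_{\Delta} \times \Delta$. Composition with $\tilde{\Psi}_{p,\delta}$ identifies $\scrM^{q}_{\kappa^{\delta}}(x_{-}, x_{+})$ with a moduli space $\scrM^{p}_{\kappa^{\delta}}(x_{-}, x_{+})$ of $\tilde{J}_{p,\delta}$-holomorphic sections with boundary on $F_{p}$ and $\phi^{s}_{\delta}(L)$, and the area calculation of Lemma \ref{lem:area_argument_Floer_differential} adapts directly to give the parametrised energy identity
\begin{equation*}
\cE(\tilde{\Psi}_{p,\delta} \circ v) = \cE(v) + \langle p - q, [\partial v]\rangle + g_{x_{-}}(q) - g_{x_{+}}(q) + g_{x_{+}}(p) - g_{x_{-}}(p).
\end{equation*}

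Next, I would observe that Equation \eqref{eq:higher_homotopy_Delta} lies in $\scrO_{P_{\Delta}}$ precisely when, for each $p \in P_{\Delta}$ and each $E \in \bR$, only finitely many rigid elements $v$ of $\scrM^{q}_{\kappa^{\Delta}}(x_{-}, x_{+})$ satisfy $\cE(v) + \langle p - q, [\partial v]\rangle \leq E$. By the energy identity this reduces, for each $p \in P_{\Delta}$, to properness of the topological energy on the rigid locus of $\coprod_{\delta \in \Delta} \scrM^{p}_{\kappa^{\delta}}(x_{-}, x_{+})$. Convergence of the resulting function in $\scrO_{P_{\Delta}}$ then follows from the description of that ring recalled in Section \ref{sec:constr-mirr-space}.

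The principal obstacle is this parametrised Gromov compactness, uniform in $(\delta, p) \in \Delta \times P_{\Delta}$, and I would handle it exactly as in the proof of Lemma \ref{lem:bounded_energy_move_p}. Condition \eqref{eq:diffeo_indep_outside_cpct} forces the off-diagonal term of $\tilde{J}_{p,\delta}$ to vanish outside a compact subset of $\Strip$, and compactness of $\Delta \times P_{\Delta}$ supplies a single constant $C$ for which $\tilde{J}_{p,\delta}$ is tamed by the symplectic structure \eqref{eq:form_on_product}, converting the topological energy plus the area of $C \alpha$ into a genuine geometric area. Assumption \eqref{eq:everything_is_good_with_curves} rules out sphere and disc bubbling, while any escape of energy along the strip-like ends would produce a Floer trajectory which, by the generic regularity of the parametrised data, forces the remaining piece of the broken curve to have strictly negative virtual dimension; this is incompatible with the rigidity of $v$. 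The conclusion is that only finitely many rigid $v$ satisfy any prescribed energy bound, so Equation \eqref{eq:higher_homotopy_Delta} defines an element of $\scrO_{P_{\Delta}}$.
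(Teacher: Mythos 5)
Your argument is correct and follows essentially the same route as the paper's proof: exploit compactness of $\Delta$ (together with the diffeomorphisms being the identity at $p=q$) to obtain a uniform neighbourhood of $q$ on which the pulled-back almost complex structures remain tame, and then invoke the parametrised analogue of Lemma \ref{lem:bounded_energy_move_p} to get the properness of energy needed for convergence. You simply spell out in more detail the energy identity and the exclusion of broken configurations that the paper leaves implicit.
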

\begin{proof}
Condition \eqref{eq:condition_continuation_Hamiltonian} and the compactness of $\Delta$ imply that there is a constant $S_{\Delta}$ so that for all $\delta \in \Delta$, $H^{s}_{\delta}$ agrees with $H^{+}$ if $S_{\Delta} \leq s$, and with $H^{-}$ if $s \leq - S_{\Delta}$.  Consider a smooth family $\Psi_{\delta,p,s,t} $ of diffeomorphisms of  $X$ parametrised by $(\delta,p,s,t) \in \Delta \times P \times \Strip$ satisfying, for fixed $\delta \in \Delta$,  Conditions \eqref{eq:identity_at_basepoint}-\eqref{eq:diffeo_indep_outside_cpct}. The assumption that these diffeomorphisms are the identity for $p=q$ and the compactness of $\Delta$ imply that the corresponding fibrewise diffeomorphisms $ \tilde{\Psi}_{\delta,p} $ of $\Strip \times X$ preserve the tameness of the almost complex structure whenever $p$ lies in a sufficiently small neighbourhood of $q$. This yields the analogue of Lemma \ref{lem:bounded_energy_move_p}, and hence convergence in this neighbourhood.
\end{proof}

\subsection{Composition of continuation map}
\label{sec:comp-cont-map}
Let $(\phi^{+}, J^{+})$, $(\phi^{0}, J^{0})$ and $(\phi^{-}, J^{-})$ denote three choices of Hamiltonian diffeomorphisms and almost complex structures, and pick (regular) continuation data $D^{+0}$, $D^{0-}$ and $D^{+-}$ which define cochain maps
\begin{equation} \label{eq:composition_continuation}
\xymatrix{    \cL(Y_P; D^{+}) \ar[rr]^{\kappa_{+-}} \ar[dr]^{\kappa_{+0}}  & & \cL(Y_P; D^{-})  \\
&  \cL(Y_P; D^{0}) , \ar[ur]^{\kappa_{0-}}  &}
\end{equation}
whenever $P$ is sufficiently small. 

Gluing the data $D^{+0}$ and $D^{0-}$, defines a continuation map from $\cL(Y_P; D^{+})  $ to $\cL(Y_P; D^{-})  $  which agrees with the composition $ \kappa_{0-} \circ \kappa_{+0}$. Choosing a homotopy between $D^{+-}$  and the glued data yields a family parametrised by an interval.  Possibly upon shrinking $P$, Equation \eqref{eq:higher_homotopy_Delta} and Proposition \ref{prop:famil-cont-maps} produce a map
\begin{equation} 
  \kappa^{1} \co \cL(Y_P; D^{+}) \to \cL(Y_P; D^{-})[1].
\end{equation}
For each pair of intersections $(x_-, x_+)$ between $\phi^{\pm}(L)$ and $F$ so that $\deg(x_+) = \deg(x_-)$, the moduli space $\scrM^{q}_{\kappa^{1}}(x_{-}, x_{+}) $ had dimension $1$, and its boundary consists of the strata
\begin{align}
&   \scrM^{q}_{\kappa}(x_{-}, x_{+}) \\
&   \coprod_{x_0 \in \phi^{0}(L) \cap F } \scrM^{q}_{\kappa}(x_{-}, x_{0}) \times \scrM^{q}_{\kappa}(x_{0}, x_{+}) \\
&  \coprod_{x'_+ \in \phi^{+}(L) \cap F } \scrM^{q}_{\kappa}(x_{-}, x'_{+}) \times \scrM^{q}(x'_{+}, x_{+})  \\
&  \coprod_{x'_- \in \phi^{-}(L) \cap F }  \scrM^{q}(x_{-}, x'_{-}) \times \scrM^{q}_{\kappa} (x'_{-}, x_{+}).
\end{align}
Counting elements of the first two moduli spaces corresponds to the two compositions in Diagram \eqref{eq:composition_continuation}.  The second two moduli spaces respectively define the composition of $\kappa^{1}$ with the differentials in $ \cL(Y_P; D^{+})$  and $ \cL(Y_P; D^{-})$.

\begin{prop} \label{prop:comp-cont-map}
  If $P$ is sufficiently small, $\kappa^{1}$ defines a homotopy between the two compositions in Diagram \eqref{eq:composition_continuation}. \qed
\end{prop}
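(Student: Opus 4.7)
The plan is to read off the homotopy relation from the boundary count of the 1-dimensional parametrised moduli spaces $\scrM^{q}_{\kappa^{1}}(x_{-}, x_{+})$ whose four boundary strata have already been identified immediately before the statement. The strategy is entirely analogous to the cochain-level proof that continuation maps between Hamiltonian Floer complexes are homotopic; the only novelty is that all counts must be packaged into elements of $\scrO_{P}$, and convergence must be controlled uniformly over $P$.

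First I would fix $P$ small enough that each of the cochain complexes $\cL(Y_{P};D^{\pm})$, $\cL(Y_{P};D^{0})$ and the three continuation maps $\kappa_{+0}$, $\kappa_{0-}$, $\kappa_{+-}$ are defined in the sense of Sections \ref{sec:conv-floer-diff} and \ref{sec:conv-cont-maps}, and also so that the 1-parameter interpolation between $D^{+-}$ and the glued datum $D^{0-}\# D^{+0}$ is tame over $P$ in the sense of Definition \ref{def:conv-cont-maps}; this last step uses Proposition \ref{prop:famil-cont-maps} applied with $\Delta$ a closed interval, and it is what forces the possible shrinking of $P$. The convergence of the series
\begin{equation}
\kappa^{1}|\delta_{x_{+}} = \bigoplus_{x_{-}} \sum_{v \in \scrM^{q}_{\kappa^{1}}(x_{-},x_{+})} T^{\cE(v)} z^{[\partial v]} \otimes \kappa^{1}_{v},
\end{equation}
defined for $\deg(x_{-}) = \deg(x_{+}) - 1$, is then an immediate consequence of that proposition.

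Next I would verify the homotopy identity $d\kappa^{1} + \kappa^{1}d = \kappa_{+-} - \kappa_{0-}\circ\kappa_{+0}$ by fixing a pair $(x_{-},x_{+})$ with $\deg(x_{-}) = \deg(x_{+})$ and analysing the boundary of the compactified $1$-dimensional moduli space $\overline{\scrM}^{q}_{\kappa^{1}}(x_{-},x_{+})$. The four boundary strata listed before the statement give exactly the four terms in the desired identity: the first stratum (the endpoint corresponding to $D^{+-}$) contributes $\kappa_{+-}$, the second (the endpoint corresponding to the glued datum, together with the standard gluing theorem for continuation homotopies) contributes $\kappa_{0-}\circ \kappa_{+0}$, and the last two contribute the two terms of $d\kappa^{1} + \kappa^{1} d$ via the boundary of the interval parameter combined with strip-breaking at the two ends. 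The signs are handled by the canonical isomorphisms $\kappa^{1}_{v}\co \delta_{x_{+}} \to \delta_{x_{-}}$ coming from index theory, exactly as in the unparametrised case.

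Finally, one must check that the identity holds not merely numerically but as an equation in $\scrO_{P}$, i.e.\ including the $T$-adic weights $T^{\cE(v)} z^{[\partial v]}$. This is automatic because each of the four gluing/breaking identifications is continuous in the domain modulus, so the boundary classes $[\partial v]$ and the energies $\cE(v)$ add correctly under concatenation of strips; this is the same bookkeeping that underlies the convergence of the Floer differential in Equation \eqref{eq:differential_local} and of the continuation maps, and was already implicitly used in Lemma \ref{lem:area_argument_Floer_differential}. The main obstacle, and the reason $P$ may need to be shrunk further than what was needed to define the individual maps, is ensuring simultaneous tameness of the fibrewise-perturbed almost complex structures $\tilde{J}_{\delta,p}$ across the full 1-parameter family $\delta \in [0,1]$ while respecting the gluing region where strip-breaking occurs; once this compactness input is in hand, the standard cobordism argument delivers the conclusion.
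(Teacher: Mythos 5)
Your proof is correct and follows essentially the same route as the paper: the paper establishes the proposition exactly by interpolating between $D^{+-}$ and the glued datum, invoking Proposition \ref{prop:famil-cont-maps} (with $\Delta$ an interval) to get convergence of $\kappa^{1}$ over a possibly shrunken $P$, and reading the homotopy identity off the four listed boundary strata of the one-dimensional parametrised moduli spaces. Your additional remarks on energy/boundary-class additivity under gluing and uniform tameness over the family are just the standard inputs the paper leaves implicit.
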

Applying the above construction to the null-homotopy for the concatenation of a path and its inverse implies:
\begin{cor} \label{cor:chain_equivalence}
 If $P$ is sufficiently small, $\kappa$ is a chain equivalence. \qed
\end{cor}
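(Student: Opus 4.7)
The plan is to construct a homotopy inverse to $\kappa$ using a reversed continuation, combined with the fact that the concatenation of a path with its reverse is null-homotopic rel endpoints to the constant path. First, I would choose continuation data $D^{-+}$ from $D^-$ back to $D^+$ (for instance, the time-reversal of the data defining $\kappa$), tame on a sufficiently small polytope. Shrinking $P$ if necessary, the construction of Section \ref{sec:conv-cont-maps} produces the associated cochain map $\kappa' \colon \cL(Y_P; D^-) \to \cL(Y_P; D^+)$.

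Next, I would apply Proposition \ref{prop:comp-cont-map} to the ordered triple $(D^+, D^-, D^+)$: the composition $\kappa' \circ \kappa$ is chain homotopic to the continuation map $\kappa_{\gamma}$ associated with the continuation datum obtained by gluing $D^{+-}$ and $D^{-+}$, a loop-shaped datum based at $D^+$. The underlying path of Floer data for $\kappa_\gamma$ is null-homotopic rel endpoints to the constant path at $D^+$. Parametrizing this null-homotopy by an interval $\Delta$ produces a family of continuation data to which Proposition \ref{prop:famil-cont-maps} applies, yielding a chain homotopy from $\kappa_\gamma$ to the continuation map $\kappa_{\mathrm{triv}}$ associated with the constant datum. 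This last map is the identity, since for the constant datum the only rigid solutions are the constant strips at each intersection point, each carrying energy zero and boundary class zero and therefore contributing $T^0 z^0 \otimes \id$ on $\delta_x$. Chaining the homotopies gives $\kappa' \circ \kappa \simeq \id$; symmetrically $\kappa \circ \kappa' \simeq \id$, so $\kappa$ is a chain equivalence.

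The main obstacle is bookkeeping the smallness of $P$: several tameness conditions must hold simultaneously, corresponding to $D^+$, $D^-$, $D^{+-}$, $D^{-+}$, the trivial datum at $D^+$, and the two one-parameter families arising from the null-homotopy and from the gluing interpolation. Each is an open condition on $P$, so the intersection of the finitely many resulting neighbourhoods still contains a nonempty integral affine polytope. A subsidiary subtlety is that the strictly constant continuation datum has an $\bR$-translation symmetry that obstructs transversality; one handles this in the usual way by making a small generic compactly supported perturbation, and verifies via an index and energy argument that only the constant strips contribute to rigid components, so that $\kappa_{\mathrm{triv}}$ is indeed the identity.
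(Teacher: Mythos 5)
Your argument is exactly the paper's intended one: the proof of Corollary \ref{cor:chain_equivalence} consists of applying the construction of Proposition \ref{prop:comp-cont-map} (via Proposition \ref{prop:famil-cont-maps}) to the null-homotopy between the concatenation of the continuation path with its reverse and the constant path, so that $\kappa'\circ\kappa$ (and symmetrically $\kappa\circ\kappa'$) is homotopic to the continuation map of the constant datum, i.e.\ the identity. Your additional remarks on shrinking $P$ to accommodate finitely many tameness conditions and on perturbing the constant datum to see that it yields the identity are correct elaborations of points the paper leaves implicit.
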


\subsection{Compatibility of restriction, continuation, and change of trivilisations}
\label{sec:comp-restr-cont}
Choose data  $(\phi^{\pm}, J^{\pm})$ as in Section \ref{sec:continuation-maps}, and trivalisations $\{ \triv_{i}\}_{i=1,2}$. These yield four sets of Floer data
\begin{equation}
D^{\pm}_{i} \equiv ( \triv_{i}, \phi^{\pm}, J^{\pm}, \{ g^{i}_{x_{\pm}} \} ).
\end{equation}
Using the same continuation equation to map the Floer complexes defined from the data $ D^{+}_{i}$  to those defined from the data $ D^{-}_{i}$, we have:
\begin{lem} \label{lem:two_maps_commute}
 The following diagram, in which the vertical arrows are continuation maps and the horizontal ones are changes of coordinates, commutes
\begin{equation}
  \xymatrix{ \cL(Y_P; D^{+}_{1}) \ar[r] \ar[d]& \cL(Y_P; D^{+}_{2}) \ar[d] \\
\cL(Y_P;  D^{-}_{1} ) \ar[r] & \cL(Y_P; D^{-}_{2} ).}
\end{equation}
\end{lem}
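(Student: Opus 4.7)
The plan is to observe that both compositions in the square count the same curves and weight them by the same monomials. The key point is that the continuation moduli spaces $\scrM^{q}_{\kappa}(x_-, x_+)$ and the induced maps $\kappa_v \co \delta_{x_+} \to \delta_{x_-}$ depend only on the continuation data $(\phi^{\pm}, J^{\pm}, \tilde J)$ (together with the fixed global $\Pin^+$ structures) and are independent of the choice of trivialisation. Consequently the two compositions $\kappa \circ C$ and $C \circ \kappa$ are indexed by the same moduli spaces, and verifying the lemma reduces to matching the monomial weights in $T$ and $z$ contributed by each individual curve.

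For a fixed $v \in \scrM^{q}_{\kappa}(x_-, x_+)$, going right then down weights the map $\kappa_v$ by
\[
T^{f(q) + \cE(v)}\, z_q^{df - dg^{1}_{x_+} + dg^{2}_{x_+} + [\partial v]_2},
\]
since the change of trivialisation acts diagonally on the $\delta_{x_+}$ factor before the continuation map appends the boundary class. Going down then right weights it by
\[
T^{f(q) + \cE(v)}\, z_q^{df - dg^{1}_{x_-} + dg^{2}_{x_-} + [\partial v]_1},
\]
where $[\partial v]_i \in H_1(F_q; \bZ)$ denotes the boundary class of Equation \eqref{eq:homology_class_strip} computed with respect to the basepoint and primitives associated to $\triv_i$. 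Commutativity therefore reduces to the identity
\[
[\partial v]_2 - [\partial v]_1 \;=\; \bigl(dg^{2}_{x_-} - dg^{1}_{x_-}\bigr)(q) - \bigl(dg^{2}_{x_+} - dg^{1}_{x_+}\bigr)(q)
\]
in $T^{*}_{\bZ, q} Q \cong H_1(F_q; \bZ)$, the right-hand side being a lattice element because $g^{2}_{x} - g^{1}_{x} - f$ is integral affine for each $x$.

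To prove this identity I would lift everything to the universal cover $T^{*}_{q} Q$ of the fibre $F_q$. Since $\triv_2 = \triv_1 + df$ as fibrewise translation, the two basepoints satisfy $b_2 = b_1 - df(q)$ once fibres are identified, while any two lifts of the path $\partial v$ to $T^{*}_{q} Q$ differ by a lattice translation and hence share the same translation vector. Computing the translation vector of the lift of each closed loop defining $[\partial v]_i$ as the sum of the contributions of the two closing segments $t \mapsto t\, dg^{i}_{x_\pm}$ and of the lift of $\partial v$, the dependence on the basepoint cancels between the two closing segments and only the difference of primitives at $q$ remains, yielding the required identity.

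The only genuine obstacle is bookkeeping, namely simultaneously tracking the basepoint shift produced by change of trivialisation, the integral affine ambiguity in the choices of $g^{i}_{x}$, and the Arnol'd--Liouville identification $H_1(F_q; \bZ) \cong T^{*}_{\bZ, q} Q$; once these are organised, no analytic input beyond the explicit definitions of the continuation map and the change of trivialisation is required.
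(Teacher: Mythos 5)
Your proposal is correct and follows essentially the same route as the paper: both reduce commutativity of the square to the identity in $H_{1}(F_{q};\bZ)$ relating the two boundary classes $[\partial v]^{1},[\partial v]^{2}$ to the differences of primitives $dg^{2}_{x_{\pm}}-dg^{1}_{x_{\pm}}$ (and $df$), using that the continuation moduli spaces themselves do not see the trivialisation. The only difference is that you spell out the verification of this lattice identity by lifting to $T^{*}_{q}\Q$, a step the paper's proof leaves implicit.
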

\begin{proof}
 The class in $H_{1}(F_{q}, \bZ) $ associated to a continuation map $v$ depends on the choices of local primitives. We write $[\partial v]^{i}$ for the choice associated to $i$. With this in mind, the commutativity of the diagram reduces to the equality
  \begin{equation}
 [\partial v]^{1} + dg^{1}_{x_+} - dg^{1}_{x_-}  - d f  =  [\partial v]^{2} + dg^{2}_{x_+} - dg^{2}_{x_-}  - d f \in H_{1}(F_{q}, \bZ) \subset T^{*}_{q} P. 
  \end{equation}
\end{proof}

Similarly, given a subpolytope $P' \subset P$ with basepoint $q'$, define $\kappa|P'$ to be the continuation map associated to the data
\begin{equation}
  ( \{ \phi^{s} \},   \tilde{J}_{q'}).
\end{equation}
As in Lemma \ref{lem:two_maps_commute}, there is a commutative diagram 
\begin{equation}
  \xymatrix{ \cL(Y_P; D^{+}) \ar[r]^{\kappa} \ar[d]& \cL(Y_P; D^{-}) \ar[d] \\
\cL(Y_{P'};  D^{+}|_{P'} ) \ar[r]^{\kappa|P'}  & \cL(Y_{P'}; D^{-}|_{P'} ).}
\end{equation}

\section{From Lagrangians to twisted sheaves}
\label{sec:from-lagr-grad}

\subsection{Homological patching}
\label{sec:homological-patching}
In this section,  Floer cohomology groups are used to define an  $\alpha_{X}$-twisted sheaf $H^* \cL$ associated to a Lagrangian $L$.

Start by choosing a simplicial triangulation as in Section \ref{sec:lagr-torus-fibr}, with associated cover $P_{i}$ satisfying Condition \eqref{eq:P-is-polygon}. Denoting by $\scrO_{I}$ the ring of functions on the inverse image $Y_{I}$ of $P_{I}$, this cover should be sufficiently fine that:
\begin{enumerate}
\item for each vertex $i \in \cA$, there are Floer data $D_{i}$ defining complexes of $\scrO_{i}$ modules $ \cL(i) \equiv \cL(Y_{i}; D_{i}) $.
\item for each pair of vertices $i < j$, there are continuation data $D_{ij}$ defining chain equivalences
  \begin{equation}
\cL_{ij} \co   \cL(Y_{ij}; D_{i} | P_{ij})  \to   \cL(Y_{ij}; D_{j}| P_{ij}).
  \end{equation}
\item for each triple of vertices $i < j < k$, there are homotopies $D_{ijk}$ of continuation data between $D_{ik}$ and the gluing of $D_{ij}$ and $D_{jk}$ defining a chain homotopy $\cL_{ijk}$ in the diagram
\begin{equation} 
\xymatrix{    \cL(Y_{ijk}; D_{i}| P_{ijk}) \ar[rr] \ar[dr]  & &  \cL(Y_{ijk}; D_{k}| P_{ijk}) \\
&  \cL(Y_{ijk}; D_{j}| P_{ijk}) . \ar[ur] &}
\end{equation}
\end{enumerate}
 
\begin{lem} \label{lem:fine_triangulation_homological}
If the triangulation of $\Q$ is sufficiently fine, there are choices of data satisfying the above properties.
\end{lem}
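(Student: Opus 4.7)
The plan is to construct the data inductively on the skeleton of the triangulation, using the openness of the tameness conditions established throughout Section \ref{sec:local-constructions} together with the finiteness of the indexing set $\cA$. The key principle is that at each stage the relevant existence/tameness result (Definition \ref{def:conv-floer-diff} and the surrounding discussion for vertices; Definition \ref{def:conv-cont-maps} for pairs; Proposition \ref{prop:famil-cont-maps} for triples) produces an open polytope of tameness around a chosen basepoint, and it suffices to refine the triangulation until each $P_I$ fits inside the relevant polytope.

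First, for each vertex $i \in \cA$ I would pick a basepoint $q_i \in P_i$, a Hamiltonian diffeomorphism $\phi_i$ making $\phi_i(L)$ transverse to $F_{q_i}$, a generic family of almost complex structures $J_i$, a trivialisation $\triv_i$ of $X_{P_i}$, and primitives $g_{x,i}$ for the sheets of $\phi_i(L)$; Section \ref{sec:conv-floer-diff} then yields an open polytope $P_i^{\mathrm{tame}} \ni q_i$ on which $D_i = (\triv_i,\phi_i,J_i,\{g_{x,i}\})$ is tame. Next, for each pair $i<j$ I would choose a Hamiltonian path from $\phi_i$ to $\phi_j$ and an interpolating almost complex structure on $\Strip \times X$ (taken generically so moduli of continuation strips are regular), producing elementary continuation data $D_{ij}$ tame on some polytope $P_{ij}^{\mathrm{tame}}$ by the Lemma following Definition \ref{def:conv-cont-maps}; that the resulting $\cL_{ij}$ are chain equivalences follows from Corollary \ref{cor:chain_equivalence}. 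Finally, for each triple $i<j<k$, I would pick a homotopy $D_{ijk}$ between $D_{ik}$ and the concatenation of $D_{ij}$ with $D_{jk}$, tame on a polytope $P_{ijk}^{\mathrm{tame}}$ by Proposition \ref{prop:famil-cont-maps}, and the chain-homotopy property (3) follows from Proposition \ref{prop:comp-cont-map}.

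The precise \emph{sufficient fineness} condition is then simply that $P_I \subset P_I^{\mathrm{tame}}$ for every simplex $I$ of dimension $\leq 2$. Since $\cA$ is finite and each $P_I^{\mathrm{tame}}$ is open, this can be achieved after finitely many barycentric subdivisions of an initial triangulation subordinate to the cover $\{P_i\}$; at simplices newly created by subdivision the data are inherited by restricting the previously chosen data from a containing simplex, using that tameness, and in fact all the constructions of Section \ref{sec:local-constructions}, are preserved under passage to smaller polytopes as noted in Section \ref{sec:comp-restr-cont}. The main obstacle I anticipate is not analytic but combinatorial: one must ensure that after subdivision every new vertex inherits data compatible with its neighbours, every new edge's continuation data interpolates its endpoint data, and similarly for triangles. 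This is handled by first making all generic choices on the original cover and only then subdividing, so that on the refined cover all data and all required compatibilities are obtained tautologically by restriction, without introducing new generic choices that would themselves require further refinement.
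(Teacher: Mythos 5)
Your overall strategy --- exploit the openness of the tameness conditions from Section \ref{sec:local-constructions}, the finiteness of $\cA$, and make all choices on a coarse collection of polytopes before refining, so that the data on the fine objects are obtained by restriction --- is the same as the paper's. However, the specific mechanism you propose (assign data to the simplices of a preliminary triangulation, then barycentrically subdivide, with data on new simplices ``inherited tautologically by restriction from a containing simplex'') does not work as stated, and the paper organises the argument differently precisely to sidestep this. After subdivision the combinatorics genuinely changes: a new vertex lying in the interior of an old edge or triangle has as carrier a simplex equipped with continuation or homotopy data, not Floer data, so ``restrict from a containing simplex'' does not by itself supply the datum $D_{i}$ demanded by property (1); a new edge whose two endpoints inherit the same Floer datum needs continuation data from that datum to itself (say the constant continuation), which is not among your original choices; and a new triangle having such an edge needs a chain homotopy between, say, the continuation map of the restricted $D_{jk}$ and its composition with the constant continuation --- this is not the restriction of any originally chosen $D_{ijk}$, so genuinely new (if canonical) data must be produced and shown to converge. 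Moreover, the tameness regions furnished by Definition \ref{def:conv-cont-maps} and Proposition \ref{prop:famil-cont-maps} are neighbourhoods of chosen basepoints, so to guarantee that every simplex of the subdivided triangulation lies in such a region you need to anchor data at finitely many basepoints covering each original polytope \emph{before} subdividing; this covering step is not spelled out, and it is exactly where the circularity you were trying to avoid threatens to reappear.

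The paper's proof avoids this bookkeeping by never tying the choices to a triangulation at all: it fixes a nested tower of covers --- $\scrU_{1}$ by polytopes with tame Floer data, $\scrU_{2}$ subordinate to $\scrU_{1}$ with convergent continuation data between the restricted Floer data of each containing pair, and $\scrU_{3}$ subordinate to $\scrU_{2}$ with convergent homotopies between all compositions of those continuation data --- and only afterwards requires the triangulation to be subordinate to $\scrU_{3}$, picking $D_{i}$ among the data of $\scrU_{3}$-elements containing $P_{i}$. With this order of quantifiers every datum required by (1)--(3) literally is a restriction of one already chosen (using the compatibility of restriction from Section \ref{sec:comp-restr-cont}), and no subdivision or inheritance scheme is needed. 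Your argument can be repaired along these lines by replacing ``subdivide the triangulation that carries the data'' with ``choose the data on covers first, then take any sufficiently fine triangulation''; but as written, the claim that all compatibilities for the subdivided triangulation are obtained without introducing new data is not correct.
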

\begin{proof}
Start with a finite cover $\scrU_{1}$ by polytopes equipped with tame Floer data. Then choose a second cover $\scrU_{2}$, subordinate to $\scrU_{1}$, so that, for each element of $\scrU_2$ contained in a pair of elements of $\scrU_{1}$, there are convergent continuation data between the two Floer data obtained by restriction, and fix a choice of such data for pairs. Finally, we pick a cover $\scrU_{3}$, subordinate to $\scrU_{2}$, so that there are convergent chain homotopies between all compositions of  the continuation data chosen for $\scrU_{2}$. 

Now, assume that the triangulation of $\Q$ labelled by $\cA$ is subordinate to $\scrU_{3}$ (i.e. so that all open stars of all vertices are contained in an element of the cover), choose the polytope $P_i$ for each element  $i \in \cA$ to also be subordinate to this cover and contain the open star of $i$.  Pick the data $D_i$ arbitrarily among the Floer data associated to elements of $\scrU_{3}$ which contain $P_{i}$. The above choices determine continuation maps  and homotopies satisfying the desired properties.
\end{proof}

\begin{lem}
The modules $H^* \cL(i)  $ and the structure maps $\cL_{ij}$ define an $\alpha_{X}$-twisted sheaf on $Y$.
\end{lem}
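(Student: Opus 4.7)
The plan is to unwind the definition of an $\alpha_X$-twisted sheaf and check that taking cohomology of the chain-level data constructed in Lemma \ref{lem:fine_triangulation_homological} yields such an object. For an ordered subset $I \subset \cA$ with final vertex $i_I$, set $\cF(I) = H^{*}\cL(Y_I; D_{i_I}|P_I)$; this is an $\scrO_I$-module since $\cL(Y_I; D_{i_I}|P_I)$ is a complex of free $\scrO_I$-modules with $\scrO_I$-linear differential. For a nested pair $I \subset J$ with final vertices $i \leq j$, define the structure map $\cF_{IJ} \co \scrO_J \otimes_{\scrO_I}\cF(I) \to \cF(J)$ as the cohomology of the composite of the restriction map of Section \ref{sec:conv-floer-diff} with (when $i < j$) the continuation map $\cL_{ij}|_{Y_J}$.

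That each $\cF_{IJ}$ is an isomorphism is the easy half: after base change to $\scrO_J$ the restriction becomes an isomorphism of free modules (on each generator $\delta_x$ it is multiplication by a Novikov unit of the form $T^{g_x(q)-g_x(q')}$), while $\cL_{ij}|_{Y_J}$ induces an isomorphism on cohomology by Corollary \ref{cor:chain_equivalence}.

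The heart of the argument is the twisted cocycle identity
\[
\cF_{JK}\circ \cF_{IJ}|_{Y_K} \;=\; \exp(\alpha_X(IJK))\cdot \cF_{IK}
\]
for every nested triple $I \subset J \subset K$ with final vertices $i \leq j \leq k$. When two of $i,j,k$ coincide, $\alpha_X(IJK)=1$ by definition, and the identity reduces to compatibility of restriction with continuation and with itself, which is supplied by the two diagrams of Section \ref{sec:comp-restr-cont}. The essential case is $i<j<k$: I would view $\cL_{ij}$ as a continuation map performed in a common trivialisation, followed by the change-of-trivialisation map with primitive $f_{ij}$, and similarly for $\cL_{jk}$ and $\cL_{ik}$. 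The homotopy $\cL_{ijk}$ identifies the continuation parts of $\cL_{jk}\circ\cL_{ij}$ and $\cL_{ik}$ up to chain homotopy, so that the discrepancy between composite and single step reduces, on each $\delta_x$ summand, to the diagonal factor $T^{f_{ij}(q)+f_{jk}(q)-f_{ik}(q)}\,z_q^{df_{ij}+df_{jk}-df_{ik}}$; this is precisely $\exp(\alpha_X(ijk))$ by Equation \eqref{eq:representative_alpha} and the formula defining $\exp$ in Section \ref{sec:rigid-analyt-gerb}.

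The step that I expect to cost the most work is the bookkeeping in this last case: one must track how the primitives $g_x$ for the local sheets of $\phi(L)$ shift under change of trivialisation, verify that these shifts cancel from the final identity, and confirm that every intermediate map is defined and convergent on $P_K$. Once these checks are absorbed into the compatibility diagrams of Section \ref{sec:comp-restr-cont}, the twisted cocycle identity descends cleanly to cohomology, giving the desired $\alpha_X$-twisted sheaf.
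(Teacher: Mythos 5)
Your proposal is correct and takes essentially the same route as the paper: you reduce the twisted cocycle identity to the compatibility of continuation with restriction and change of trivialisation (Section \ref{sec:comp-restr-cont}, in particular Lemma \ref{lem:two_maps_commute}) together with the homotopies $\cL_{ijk}$, so that only the change-of-trivialisation factors remain. Your residual diagonal factor $T^{f_{ij}(q)+f_{jk}(q)-f_{ik}(q)}\, z_q^{df_{ij}+df_{jk}-df_{ik}} = \exp(\alpha_X(ijk))$, with the $dg_x$ terms cancelling, is exactly the explicit computation in the paper's proof via Equation \eqref{eq:representative_alpha}.
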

\begin{proof}
It suffices to show that the restriction maps commute up to multiplication by $\exp(\alpha_{X}(ijk))$.  Lemma \ref{lem:two_maps_commute} reduces the proof  to the case in which the Floer data $(\phi_{j}, J_{j})$ and $(\phi_{k}, J_{k})$ are obtained by restricting common data $(\phi_{i}, J_{i})$. Let $(f_{ij}, f_{jk},f_{ik} )$  denote the transition functions between the three different trivialisations fixed on $P_{i}$, $P_{j}$ and $P_{k}$. The composition $ \cL_{jk} \circ    \cL_{ij} $ is given on $\delta_{x}$ by multiplication with
\begin{equation}
T^{  f_{jk} + f_{ij}} z^{ d f_{jk} + df_{ij} - dg^{k}_{x} + dg^{i}_{x}} = \exp(f_{jk} + f_{ij}) T^{dg^{i}_{x} - dg^{k}_{x}},
\end{equation}
while the restriction of $ \cL_{ik}  $ to $\delta_{x}$ agrees with
\begin{equation}
T^{  f_{ik}} z^{ d f_{ik} - dg^{k}_{x} + dg^{i}_{x}}= \exp(f_{ik}) T^{dg^{i}_{x} - dg^{k}_{x}}.
\end{equation}
The result now follows immediately from Equation \eqref{eq:representative_alpha}.
\end{proof}

\subsection{Towards the mirror functor}
\label{sec:towards-mirr-funct}
To each ordered subset $I = (i_0, \ldots, i_r) \subset \cA$ corresponding to an $r$-dimensional simplex in $\Q$, Adams' construction \cite{adams} associates an $r-1$-dimensional cube $\sigma_{I}$ of paths in $\Q$ from the initial to the final vertex.  Paths parametrised by the boundary of this cube are given by (i) the family of paths associated to codimension $1$ subsimplices, and (ii) the product of the cubes associated to a pair of complementary simplices in $I$. The homotopy constructed in Section \ref{sec:comp-cont-map} for a triple arose from a family of continuation map associated to such a $1$-dimensional cube in the case $r=2$.

By gluing and induction on dimension, one obtains a family $D_{I}$ of continuation maps from $D_{i_0}$ to $D_{i_r}$ parametrised by $\sigma_{I}$, whose restriction to the boundary strata of $\sigma_{I}$ are given either by the continuation maps associated to a subsimplex, or the concatenation of continuation maps associated to complementary simplices. We are in the setting of Section \ref{sec:famil-cont-maps} so, assuming the parametrised data are chosen generically and the triangulation is sufficiently fine, the count of rigid elements of such a moduli space defines a map
\begin{equation} \label{eq:higher_product_naive}
    \cL_{I} \co   \cL(Y_{i_0}; D_{i_0})  \to   \cL(Y_{I}; D_{i_r}| P_{I})
\end{equation}
of degree $-r$. Adopting the convention that
\begin{equation}
   \cL(I) \equiv \cL(Y_{I}; D_{i_r} | P_{I}),
\end{equation}
the maps in Equation \eqref{eq:higher_product_naive} naturally extend to an $A_{\infty}$ module over $\scrO^{\alpha_{X}}_{\cA} $, i.e an $A_{\infty}$ functor
\begin{equation}
\cL \co \scrO^{\alpha_{X}}_{\cA} \to \Vect_{\Lambda}.
\end{equation}
This data is exactly that of an $\alpha_X$-twisted $A_{\infty}$-presheaf of $\scrO$-complexes on $Y$. Keeping in mind the fact that $\cL(I)$ is a finite rank free $\scrO_{I}$ module, and that the map associated to an inclusion is a quasi-isomorphism (after restriction), $\cL$ in fact defines an object of the $A_{\infty}$-category of $\alpha_X$-twisted sheaves of perfect complexes on $Y$, with respect to the cover $\cA$. In this sense, the assignment $L \to \cL$ gives, at the level of objects, the mirror functor between the derived Fukaya category of $X$ and the derived category of $\alpha_X$-twisted coherent sheaves on $Y$.

\begin{rem}
It is easy in this setting to implement one of the standard equivalences between $A_{\infty}$ and $dg$-modules, and replace $\cL$ by a quasi-equivalent $dg$-module over $\scrO^{\alpha_{X}}_{\cA}$, see e.g. \cite[Theorem 6.15]{fukaya-familyHF}. Since the Fukaya category is an $A_{\infty}$ category, such a replacement does not seem to particularly simplify this approach to Homological mirror symmetry.
\end{rem}

\begin{bibdiv}
\begin{biblist}


\bib{ASmith}{article}{
   author={Abouzaid, Mohammed},
   author={Smith, Ivan},
   title={Homological mirror symmetry for the 4-torus},
   journal={Duke Math. J.},
   volume={152},
   date={2010},
   number={3},
   pages={373--440},
   issn={0012-7094},
}

\bib{AAKO}{article}{
  author={Abouzaid, Mohammed},
   author={Auroux, Denis},
  author={Orlov, Dmitri},
  author={Katzarkov, L.},
title={Homological mirror symmetry for Kodaira-Thurston manifolds},
note={in preparation}
}

\bib{adams}{article}{
   author={Adams, J. F.},
   title={On the cobar construction},
   journal={Proc. Nat. Acad. Sci. U.S.A.},
   volume={42},
   date={1956},
   pages={409--412},
}

\bib{AJ}{article}{
   author={Akaho, Manabu},
   author={Joyce, Dominic},
   title={Immersed Lagrangian Floer theory},
   journal={J. Differential Geom.},
   volume={86},
   date={2010},
   number={3},
   pages={381--500},
   issn={0022-040X},
}

\bib{Clay-book}{book}{
   author={Aspinwall, Paul S.},
   author={Bridgeland, Tom},
   author={Craw, Alastair},
   author={Douglas, Michael R.},
   author={Gross, Mark},
   author={Kapustin, Anton},
   author={Moore, Gregory W.},
   author={Segal, Graeme},
   author={Szendr{\H{o}}i, Bal{\'a}zs},
   author={Wilson, P. M. H.},
   title={Dirichlet branes and mirror symmetry},
   series={Clay Mathematics Monographs},
   volume={4},
   publisher={American Mathematical Society},
   place={Providence, RI},
   date={2009},
   pages={x+681},
   isbn={978-0-8218-3848-8},
}

\bib{batyrev}{article}{
   author={Batyrev, Victor V.},
   title={Mirror symmetry and toric geometry},
   booktitle={Proceedings of the International Congress of Mathematicians,
   Vol. II (Berlin, 1998)},
   journal={Doc. Math.},
   date={1998},
   number={Extra Vol. II},
   pages={239--248 (electronic)},
   issn={1431-0635},
}

\bib{BGR}{book}{
   author={Bosch, S.},
   author={G{\"u}ntzer, U.},
   author={Remmert, R.},
   title={Non-Archimedean analysis},
   series={Grundlehren der Mathematischen Wissenschaften},
   volume={261},
   publisher={Springer-Verlag},
   place={Berlin},
   date={1984},
   pages={xii+436},
   isbn={3-540-12546-9},
}

\bib{costello}{article}{
   author={Costello, Kevin},
   title={The partition function of a topological field theory},
   journal={J. Topol.},
   volume={2},
   date={2009},
   number={4},
   pages={779--822},
   issn={1753-8416},
}

\bib{duistermaat}{article}{
   author={Duistermaat, J. J.},
   title={On global action-angle coordinates},
   journal={Comm. Pure Appl. Math.},
   volume={33},
   date={1980},
   number={6},
   pages={687--706},
   issn={0010-3640},
}

\bib{fukaya-familyHF}{article}{
   author={Fukaya, Kenji},
   title={Floer homology for families---a progress report},
   conference={
      title={Integrable systems, topology, and physics},
      address={Tokyo},
      date={2000},
   },
   book={
      series={Contemp. Math.},
      volume={309},
      publisher={Amer. Math. Soc.},
      place={Providence, RI},
   },
   date={2002},
   pages={33--68},
}

\bib{fukaya-K3}{article}{
   author={Fukaya, Kenji},
   title={Multivalued Morse theory, asymptotic analysis and mirror symmetry},
   conference={
      title={Graphs and patterns in mathematics and theoretical physics},
   },
   book={
      series={Proc. Sympos. Pure Math.},
      volume={73},
      publisher={Amer. Math. Soc.},
      place={Providence, RI},
   },
   date={2005},
   pages={205--278},
}

\bib{Fukaya-cyclic}{article}{
   author={Fukaya, Kenji},
   title={Cyclic symmetry and adic convergence in Lagrangian Floer theory},
   journal={Kyoto J. Math.},
   volume={50},
   date={2010},
   number={3},
   pages={521--590},
   issn={2156-2261},
}
\bib{Fukaya-announce}{article}{
   author={Fukaya, Kenji},
title={Lagrangian surgery and ridig analytic family of Floer homologies}, 
 place={Berkeley, CA},
   date={2009},
eprint={http://www.math.kyoto-u.ac.jp/~fukaya/fukaya.html},
}

\bib{FOOO}{book}{
   author={Fukaya, Kenji},
   author={Oh, Yong-Geun},
   author={Ohta, Hiroshi},
   author={Ono, Kaoru},
   title={Lagrangian intersection Floer theory: anomaly and obstruction.
   Part I},
   series={AMS/IP Studies in Advanced Mathematics},
   volume={46},
   publisher={American Mathematical Society},
   place={Providence, RI},
   date={2009},
   }

\bib{FOOO-toric}{article}{
   author={Fukaya, Kenji},
   author={Oh, Yong-Geun},
   author={Ohta, Hiroshi},
   author={Ono, Kaoru},
   title={Lagrangian Floer theory on compact toric manifolds. I},
   journal={Duke Math. J.},
   volume={151},
   date={2010},
   number={1},
   pages={23--174},
   issn={0012-7094},
}

\bib{FOOO-displace}{article}{
   author={Fukaya, Kenji},
   author={Oh, Yong-Geun},
   author={Ohta, Hiroshi},
   author={Ono, Kaoru},
   title={Displacement of polydisks and Lagrangian Floer theory},
   journal={J. Symplectic Geom.},
   volume={11},
   date={2013},
   number={2},
   pages={231--268},
   issn={1527-5256},
}

\bib{givental}{article}{
   author={Givental, Alexander},
   title={A mirror theorem for toric complete intersections},
   conference={
      title={Topological field theory, primitive forms and related topics
      (Kyoto, 1996)},
   },
   book={
      series={Progr. Math.},
      volume={160},
      publisher={Birkh\"auser Boston},
      place={Boston, MA},
   },
   date={1998},
   pages={141--175},
}

\bib{gross-siebert-I}{article}{
   author={Gross, Mark},
   author={Siebert, Bernd},
   title={Mirror symmetry via logarithmic degeneration data. I},
   journal={J. Differential Geom.},
   volume={72},
   date={2006},
   number={2},
   pages={169--338},
   issn={0022-040X},
}

\bib{gross-siebert}{article}{
   author={Gross, Mark},
   author={Siebert, Bernd},
   title={From real affine geometry to complex geometry},
   journal={Ann. of Math. (2)},
   volume={174},
   date={2011},
   number={3},
   pages={1301--1428},
   issn={0003-486X},
}

\bib{KKP}{article}{
   author={Katzarkov, L.},
   author={Kontsevich, M.},
   author={Pantev, T.},
   title={Hodge theoretic aspects of mirror symmetry},
   conference={
      title={From Hodge theory to integrability and TQFT tt*-geometry},
   },
   book={
      series={Proc. Sympos. Pure Math.},
      volume={78},
      publisher={Amer. Math. Soc.},
      place={Providence, RI},
   },
   date={2008},
   pages={87--174},
}

\bib{Kodaira}{article}{
   author={Kodaira, K.},
   title={On the structure of compact complex analytic surfaces. II},
   journal={Amer. J. Math.},
   volume={88},
   date={1966},
   pages={682--721},
   issn={0002-9327},
}

\bib{Kontsevich-ICM}{article}{
   author={Kontsevich, Maxim},
   title={Homological algebra of mirror symmetry},
   conference={
      title={ 2},
      address={Z\"urich},
      date={1994},
   },
   book={
      publisher={Birkh\"auser},
      place={Basel},
   },
   date={1995},
   pages={120--139},
}

\bib{KS}{article}{
   author={Kontsevich, Maxim},
   author={Soibelman, Yan},
   title={Affine structures and non-Archimedean analytic spaces},
   conference={
      title={The unity of mathematics},
   },
   book={
      series={Progr. Math.},
      volume={244},
      publisher={Birkh\"auser Boston},
      place={Boston, MA},
   },
   date={2006},
   pages={321--385},
}

\bib{polterovich}{article}{
   author={Polterovich, L.},
   title={The surgery of Lagrange submanifolds},
   journal={Geom. Funct. Anal.},
   volume={1},
   date={1991},
   number={2},
   pages={198--210},
   issn={1016-443X},
}

\bib{seidel-Book}{book}{
   author={Seidel, Paul},
   title={Fukaya categories and Picard-Lefschetz theory},
   series={Zurich Lectures in Advanced Mathematics},
   publisher={European Mathematical Society (EMS), Z\"urich},
   date={2008},
   pages={viii+326},
   isbn={978-3-03719-063-0},
}

\bib{seidel-quartic}{article}{
   author={Seidel, Paul},
   title={Homological mirror symmetry for the quartic surface},
   eprint = {arXiv:0310414},
}

\bib{sheridan}{article}{
   author={Sheridan, Nick},
   title={Homological Mirror Symmetry for Calabi-Yau hypersurfaces in projective space},
   eprint = {arXiv:1111.0632},
}

\bib{SYZ}{article}{
   author={Strominger, Andrew},
   author={Yau, Shing-Tung},
   author={Zaslow, Eric},
   title={Mirror symmetry is $T$-duality},
   journal={Nuclear Phys. B},
   volume={479},
   date={1996},
   number={1-2},
   pages={243--259},
   issn={0550-3213},
}

\bib{tate}{article}{
   author={Tate, John},
   title={Rigid analytic spaces},
   journal={Invent. Math.},
   volume={12},
   date={1971},
   pages={257--289},
   issn={0020-9910},
}

\bib{thurston}{article}{
   author={Thurston, W. P.},
   title={Some simple examples of symplectic manifolds},
   journal={Proc. Amer. Math. Soc.},
   volume={55},
   date={1976},
   number={2},
   pages={467--468},
   issn={0002-9939},
}

\bib{tu}{article}{
   author={Tu, Junwu},
   title={On the reconstruction problem in mirror symmetry},
   journal={Adv. Math.},
   volume={256},
   date={2014},
   pages={449--478},
   issn={0001-8708},
   review={\MR{3177298}},
   doi={10.1016/j.aim.2014.02.005},
}

\bib{tu2}{article}{
   author = {Tu, Junwu},
    title = {Homological Mirror Symmetry and Fourier–Mukai Transform},
   eprint = {arXiv:1208.5912},
note={To appear in Int. Math. Res. Not.},
}

\bib{ueno}{article}{
   author={Ueno, Kenji},
   title={Compact rigid analytic spaces with special regard to surfaces},
   conference={
      title={Algebraic geometry, Sendai, 1985},
   },
   book={
      series={Adv. Stud. Pure Math.},
      volume={10},
      publisher={North-Holland},
      place={Amsterdam},
   },
   date={1987},
   pages={765--794},
}

\end{biblist}
\end{bibdiv}
\end{document}